\numberwithin{equation}{section}
\numberwithin{figure}{section}
\numberwithin{table}{section}
\numberwithin{algorithm}{section}
\numberwithin{equation}{section}
\def\Rout{R^{\text{out}}}
\def\Yout{Y^{\text{out}}}
\def\Zout{Z^{\text{out}}}
\def\xapprox{x^{\text{approx}}}
\def\R{\mathbb{R}}
\def\Sc{\mathbb{S}}
\def\Sn{\Sc^n}
\def\Snp{\Sc_+^n}
\def\Snpp{\Sc_{++}^n}
\def\Rn{\mathbb{R}^n}
\def\eqref#1{{\normalfont(\ref{#1})}}
\newenvironment{noteH}{\begin{quote}\color{blue} \small\sf HenryW $\clubsuit$~}{\end{quote}}
\def\eqref#1{{\normalfont(\ref{#1})}}
\newtheorem{theorem}{Theorem}[section]
\newtheorem{thm}[theorem]{Theorem}
\newtheorem{corollary}[theorem]{Corollary}
\newtheorem{remark}[theorem]{Remark}
\newtheorem{lemma}[theorem]{Lemma}
\crefname{thm}{Theorem}{Theorems}
\Crefname{thm}{Theorem}{Theorems}
\crefname{problem}{Problem}{Theorems}
\Crefname{problem}{Problem}{Theorems}
\crefname{conjecture}{Conjecture}{Theorems}
\Crefname{conjecture}{Conjecture}{Theorems}
\crefname{proposition}{Proposition}{Propositions}
\Crefname{proposition}{Proposition}{Propositions}
\crefname{prop}{Proposition}{Propositions}
\Crefname{prop}{Proposition}{Propositions}
\crefname{cor}{Corollary}{Corollaries}
\Crefname{cor}{Corollary}{Corollaries}
\crefname{lem}{Lemma}{Lemmas}
\Crefname{lem}{Lemma}{Lemmas}
\theoremstyle{definition}
\crefname{definition}{definition}{definitions}
\Crefname{definition}{Definition}{Definitions}
\crefname{defn}{definition}{definitions}
\Crefname{defn}{Definition}{Definitions}
\crefname{remark}{Remark}{Remarks}
\Crefname{remark}{Remark}{Remarks}
\crefname{rmk}{Remark}{Remarks}
\Crefname{rmk}{Remark}{Remarks}
\crefname{example}{Example}{Examples}
\Crefname{example}{Example}{Examples}
\crefname{align}{}{}
\Crefname{align}{}{}
\crefname{equation}{}{}
\Crefname{equation}{}{}
\newcommand{\textdef}[1]{\textit{#1}\index{#1}}
\newcommand{\<}{\langle}
\renewcommand{\>}{\rangle}
\newcommand{\bS}{\mathbb{S}}
\newcommand{\cN}{{\mathcal N} }
\newcommand{\cP}{{\mathcal P}}
\newcommand{\cZ}{{\mathcal Z} }
\newcommand{\cR}{{\mathcal R} }
\newcommand{\cY}{{\mathcal Y} }
\newcommand{\cJ}{{\mathcal J} }
\newcommand{\cL}{{\mathcal L} }
\newcommand{\cV}{{\mathcal V} }
\newcommand{\cC}{{\mathcal C} }
\newcommand{\cU}{{\mathcal U} }
\newcommand{\ZZ}{{\mathcal Z} }
\newcommand{\LL}{{\mathcal L} }
\newcommand{\RR}{{\mathcal R} }
\newcommand{\NN}{{\mathcal N} }
\newcommand{\IQP}{\textbf{IQP }}
\newcommand{\IQPp}{\textbf{IQP}}
\newcommand{\PRSM}{\textbf{PRSM }}
\newcommand{\PRSMp}{\textbf{PRSM}}
\newcommand{\rPRSM}{\textbf{rPRSM }}
\newcommand{\rPRSMp}{\textbf{rPRSM}}
\newcommand{\SCPp}{\textbf{SCP}}
\newcommand{\SCP}{\textbf{SCP}\,}
\newcommand{\col}{\textbf{col}\,}
\newcommand{\DNNp}{\textbf{DNN}}
\newcommand{\DNN}{\textbf{DNN}\,}
\newcommand{\SDP}{\textbf{SDP}\,}
\newcommand{\SDPp}{\textbf{SDP}}
\newcommand{\FR}{\textbf{FR}\,}
\newcommand{\FRp}{\textbf{FR}}
\newcommand{\A}{{\mathcal A}}
\newcommand{\bbm}{\begin{bmatrix}}
\newcommand{\ebm}{\end{bmatrix}}
\newcommand{\bem}{\begin{pmatrix}}
\newcommand{\eem}{\end{pmatrix}}
\newcommand{\beq}{\begin{equation}}
\newcommand{\beqs}{\begin{equation*}}
\newcommand{\bet}{\begin{table}}
\newcommand{\eeq}{\end{equation}}
\newcommand{\eeqs}{\end{equation*}}
\newcommand{\beqr}{\begin{eqnarray}}
\DeclareMathOperator{\Null}{null}
\DeclareMathOperator{\range}{range}
\DeclareMathOperator{\trace}{{trace}}
\DeclareMathOperator{\BlkDiag}{{BlkDiag}}
\DeclareMathOperator{\tr}{{trace}}
\DeclareMathOperator{\diag}{{diag}}
\DeclareMathOperator{\Diag}{{Diag}}
\DeclareMathOperator{\rank}{{rank}}
\newcommand{\nc}{\newcommand}
\nc{\arrow}{{\rm arrow\,}}
\nc{\Arrow}{{\rm Arrow\,}}
\nc{\BoDiag}{{\rm B^0Diag\,}}
\nc{\bodiag}{{\rm b^0diag\,}}
\nc{\Mm}{{\mathcal M}^{m} }
\nc{\Mmn}{{\mathcal M}^{mn} }
\nc{\Mnr}{{\mathcal M}_{nr} }
\nc{\Mnmr}{{\mathcal M}_{(n-1)r} }
\nc{\kwqqp}{Q{$^2$}P\,}
\nc{\kwqqps}{Q{$^2$}Ps}
\def\argmin{\mathop{\rm argmin}}
\nc{\notinaho}{(X,S)\in \overline{AHO}(\A)}
\nc{\inaho}{(X,S)\in AHO(\A)}
\newcommand{\bea}{\begin{eqnarray}}%
\newcommand{\eea}{\end{eqnarray}}%
\newcommand{\beas}{\begin{eqnarray*}}%
\newcommand{\eeas}{\end{eqnarray*}}%
\newcommand{\Rnn}{\R^{n \times n}}%
\newcommand{\Hnp}[1][]{\,\mathbb{H}_+^{\ifthenelse{\equal{#1}{}}{n}{#1}}}
\newcommand{\Hn}[1][]{\,\mathbb{H}^{\ifthenelse{\equal{#1}{}}{n}{#1}}}
\newcommand{\Dn}[1][]{\,\mathbb{D}^{\ifthenelse{\equal{#1}{}}{n}{#1}}}
\newcommand*\samethanks[1][\value{footnote}]{\footnotemark[#1]}  
\begin{document}

\bibliographystyle{plain}
\title{
A Peaceman-Rachford Splitting Method for \\
the Protein Side-Chain Positioning Problem
}
  \author{
Forbes Burkowski\thanks{Professor Emeritus, Cheriton School of Computer Science,  University of Waterloo, Canada}
\and
Haesol Im
\thanks{Department of Combinatorics and Optimization, Faculty of Mathematics, University of Waterloo, Canada, Research supported by NSERC}
\and
\href{http://www.math.uwaterloo.ca/~hwolkowi/}
{Henry Wolkowicz}
\samethanks
}
\date{ 
		\today\\
}
\maketitle




\vspace{-.4in}

\begin{abstract}
We formulate a doubly nonnegative (\DNNp) relaxation of the protein
side-chain positioning (\SCPp) problem.
We inherit the natural splitting of variables that
stems from the facial reduction technique in the semidefinite
relaxation. 
We solve the relaxation using a variant of the Peaceman-Rachford splitting method.
Our numerical experiments show that we solve \emph{almost all} instances 
of the NP-hard \SCP problem to \emph{optimality}.
\end{abstract}

{\bf Keywords:}
Protein structure prediction, Side-chain positioning, 
Doubly-nonnegative relaxation, facial reduction, 
Peaceman-Rachford splitting method

\tableofcontents
\listoffigures
\listoftables

\vspace{-.2in}

\section{Introduction}
\label{sec:intro}

\index{\SCPp, side-chain positioning}
\index{\IQPp, integer quadratic problem}
\index{\DNNp, doubly nonnegative relaxation}

The \emph{protein} \textdef{side-chain positioning (\SCPp)} problem is one of the most important subproblems of the protein structure prediction problem.
We formulate the \SCP problem as an integer program and derive its 
doubly nonnegative, \DNNp, relaxation. 
We then use a variation of the \textdef{Peaceman-Rachford splitting method (\PRSMp)} to solve the \DNN relaxation.

The applications of \SCP extend to ligand binding \cite{LoogerLorenL2003Cdor, LAUDET200237} and protein-protein docking with backbone flexibility \cite{WangChu2007PDwB, MarzeNicholasA2018Efbp}.
A protein is a macromolecule consisting of a long main chain backbone that provides a set of anchors for a sequence of amino acid side-chains. The backbone is comprised of a repeating triplet of atoms (nitrogen, carbon, carbon) with the central carbon atom being designated as the alpha carbon.  An amino acid side-chain is a smaller (1 to 18 atoms) side branch that is anchored to an alpha carbon.  The positions of the atoms in a side-chain can be established by knowing the 3D position of its alpha carbon and the dihedral angles defined by atoms in the side-chain.  The number of dihedral angles varies from 1 to 4 depending on the length of the side-chain. This is true for 18 of the 20 amino acids with glycine and alanine being exceptions because their low atom counts preclude dihedral angles.

It has been observed that the values of dihedral angles are not uniformly distributed. They tend to form clusters with cluster centers that are equally separated (+60, 180, -60). Consequently, if the dihedral angles are unknown we at least have a reasonable estimate of their values by appealing to these discretized values.  With this strategy being applied, a side-chain with one dihedral angle would have three possible sets of positions for its atoms. We refer to each set of atomic positions as a rotamer.  A side-chain with two dihedral angles will have 3 times 3 or 9 different arrangements of the atoms (i.e. 9 rotamers).  Three dihedral angles will result in 27 rotamers and four dihedral angles will give 81 rotamers.

In the \SCP problem we are given a fixed backbone and a designation of the amino acid type for each alpha carbon.  To solve the problem it is required that each amino acid is assigned a particular rotameric setting with the objective of avoiding any collisions with neighbouring amino acids that are given their rotameric settings. Avoiding collisions will lower the overall energy of the protein and, in fact, even with all possible collisions circumvented we want to have an energy evaluation that is minimal.

The \SCP problem is proven to be NP-hard \cite{Akutsu1997180}.
The nature of the \SCP problem has motivated the development of many heuristic based algorithms \cite{CanutescuAdrianA2003Agaf, DesmetJohan1992Tdet, Bahadur2004ProteinSP, SamudralaR1998Dosc, BowerMichaelJ1997Pops, XuJinbo2006Faaa} and many of these approaches rely on the graph structure of the problem. 
Other approaches for solving \SCP problems have been proposed.
These range from probabilistic approaches \cite{LeeChristopher1994PPME, HolmLiisa1991Dafg, ShenkinPeterS1996Paeo}, 
integer programming \cite{MR1942452, KingsfordCarletonL2005Saas,
AlthausErnst2002Acat}, to 
semidefinite programming \cite{MR2098320,ForbesVrisWolk:11}. 
Our approach is based on a semidefinite programming relaxation.
Given a rotamer library, the \SCP problem can be formulated as an 
\textdef{integer quadratic problem (\IQPp)}.
We then obtain a \textdef{semidefinite programming (\SDPp)} relaxation
to the \IQP via a lifting of variables and \textdef{facial reduction (\FRp)}. 
We finally obtain a \textdef{doubly nonnegative (\DNNp)} relaxation
by adding nonnegativity constraints and  some additional constraints to the \SDP that help strengthen our relaxation.

The facial reduction originating from the \SDP relaxation delivers a natural splitting of variables. 
This elegant splitting of variables fits into the framework of the splitting methods. 
The framework gives an efficient procedure of engaging constraints that
are difficult to process simultaneously, see
e.g.,~\cite{GHILW:20,LiPongWolk:19,OliveiraWolkXu:15}.
We solve the \DNN relaxation using a variation of the so-called \textdef{Peaceman-Rachford splitting method (\PRSMp)}.
Using the \PRSMp, we examine the strength of our approach in the numerical experiments.
The usage of splitting method for the \DNN relaxation allows for 
an effective treatment for handling implicit redundant constraints and 
 the ill-posed data that stems from collisions between rotamers.

\index{\DNNp, doubly nonnegative relaxation}
\index{\FRp, facial reduction}
\index{\SDPp, semidefinite programming}

\subsection{Notation}

\index{$\Rn$}
\index{$\R^{m \times n}$}
\index{$\Snp$}
\index{$\Snpp$}
\index{$X \succeq 0$}
\index{$X \succ 0$}
\index{$\<\cdot,\cdot\>$}
\index{$\cN_\cC$}
\index{$\BlkDiag$}
\index{$\diag$}
\index{$\Diag$}

We let $\Rn, \R^{m \times n}$ denote the standard real Euclidean spaces;
$\Sn$ denotes the Euclidean space of $n$-by-$n$ real symmetric matrices; 
$\Snp$ ($\Snpp$, resp) denotes the cone of $n$-by-$n$ positive semidefinite (definite, resp) matrices. 
We write $X\succeq 0$ if $X\in \Snp$, and $X\succ 0$ if $X\in \Snpp$.
We use $\range(X)$ and $\Null(X)$ to denote the range of~$X$ and the null space of $X$, respectively.
Given $X \in \Rnn$, we use $\trace (X)$ to denote the \textdef{trace} of $X$.
Given two matrices $X,Y \in \R^{m \times n}$, we let $\<X,Y\> = \trace (XY^T)$ denote the usual trace inner product between $X$ and $Y$; $X \circ Y$ denote the element-wise, or Hadamard, product of $X$ and $Y$.
Given a closed convex set $\cC$ in a Euclidean space, we let $\cN_{\cC}(x)$ denote the \textdef{normal cone} at $x \in \cC$ with respect to $\cC$. 
Given $X \in \Rnn$, we use $\diag(X)$ to denote the vector formed from the diagonal entries of~$X$. 
Then $\Diag(v) = \diag^*(v)$ is the adjoint linear transformation that
forms the diagonal matrix from the vector $v$.
Given a collection of matrices $\{A_i\}_{i=1}^{m}$, we let $\BlkDiag(A_1,\ldots,A_m)$ denote the block diagonal matrix with the $i$-th diagonal block $A_i$.
We let \textdef{$\bar{e}_{n}$} denote the $n$-dimensional vector with each entry set to $1$ and we omit the subscript when the dimension is clear. 
Given a positive integer $m$, 
we often use the notation $[m]$ to mean the set of positive integers $\{1,\ldots, m\}$.

\index{$[m] = \{1,\ldots, m\}$}


\subsection{Contributions and Outline}

We present the process for formulating the model in~\Cref{sec:Model_Derivation}. 
We formulate the \SCP problem as an integer quadratic program, \IQPp,
and obtain the \SDP and \DNN relaxations.
The derivation for the \SDP relaxation is first presented in~\cite{ForbesVrisWolk:11} via Lagrangian relaxation.
Here, we present a much simpler derivation for the \SDP relaxation via direct lifting of the variables. 
In \Cref{sec:Algorithm} we present a variation of the \PRSM for solving the \DNN relaxation as well as our strategies to obtain upper and lower bounds to the \SCP problem.
We show that the splitting method engages implicit redundant constraints safely that arise from the facial reduction.
In~\Cref{sec:Numerical_Experiments} we use the real-world data from the Protein Data Bank\footnote{\url{https://www.rcsb.org/}} to 
examine the strength of our approach.
We show that the usage of splitting method to the \DNN relaxation effectively handles collisions between rotamers that are indicated by large values in the data. 
Moreover, the numerical experiments demonstrate that our approach \emph{provably}
solves \emph{almost all} instances\footnote{Out of $131$ test problems,
one problem had a positive
gap; five other problems had gaps of approximately~$10^{-6}$.}
to the global optimum of the NP-hard protein \SCP problem.


\section{Model Derivation}
\label{sec:Model_Derivation}

The goal of this section is to obtain the \DNN relaxation of the \SCP problem. 
We start by presenting a formulation of the \SCP
as an \IQP in~\Cref{sec:Problem_Formulation}.  We then
derive its \SDP relaxation in~\Cref{sec:SDP_Relaxation}.
We continue the derivation by identifying redundant constraints in the \IQP and in the 
\SDP relaxation in order to obtain a complete (stable) \DNN relaxation in \Cref{sec:DNN_Relaxation}.

\subsection{Problem Formulation as \IQPp}
\label{sec:Problem_Formulation}

\index{\SCPp, side-chain positioning}
\index{$m_i$}
\index{$n_0$, total number of rotamers}
\index{$\cV_i$, $i$-th rotamer set}
\index{$i$-th rotamer set, $\cV_i$}
\index{rotamer}

We are given a collection of disjoint sets $\cV _i, i = 1,\ldots,p$.
Each set $\cV _i$ has $m_i$ members and we index its members
\[
\cV_i := \{ v_i^1, v_i^2, \ldots, v_i^{m_i} \} , \ \text{ for all }    i =1 ,\ldots, p .
\]
We call each set $\cV_i$ a \emph{rotamer set} and its members \emph{rotamers}.
We use $n_0 = \sum_{i=1}^p m_i$ and~\textdef{$\cV = \cup_{i=1}^p \cV_i$}.
The \textdef{protein side-chain positioning problem} seeks to
\begin{enumerate}
\item select \emph{exactly one}~rotamer $v_{i}^j$, from each set $\cV_{i}$, where $j \in [m_i]$ (see~\Cref{fig:problem_illustration}\footnote{
$\cV_i$ indicates the $i$-th rotamer set and $v_i^j$ indicates the $j$-th candidate in the $i$-th rotamer set $\cV_i$.}); and
\item minimize the
sum of the weights (energy) determined by chosen rotamers, and
the energy between each chosen rotamer and the 
backbone.
\end{enumerate}
\begin{figure}[h!]
\centering
\hspace{0.8cm}
\includegraphics[trim= 0 110 0 100,clip,height=5cm]{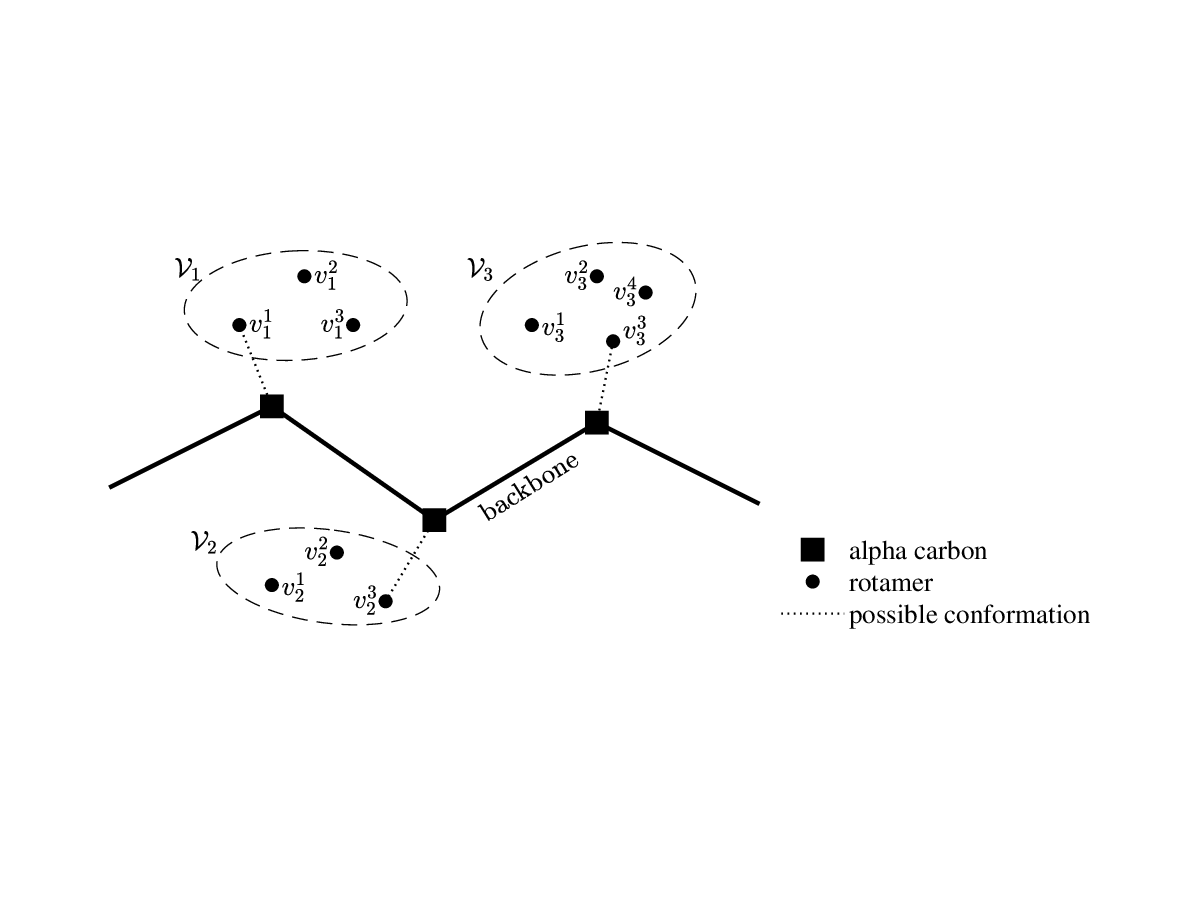}
\vspace{-0.3cm}
\caption{A Diagram of the Protein Side-Chain Positioning Problem}
\label{fig:problem_illustration}
\end{figure}

Viewing the rotamers as a set of nodes of a graph, we can realize the \SCP problem as a discrete optimization problem over a graph.
We construct a matrix $E\in \bS^{n_0}$ to record the energy values between rotamers and the backbone. 
We use the matrix entries $E_{uv}$, with $u\neq v$, 
to denote the edge weights between two distinct rotamers (nodes); 
while the diagonal entries $E_{uu}$ denote the weight between the rotamer $u$ and the backbone. 
This yields a symmetric matrix $E$, where $E_{uv}=\infty$ if both
rotamers $u,v$ are in the same set. We note that the multiplication
 $0 \cdot \infty = 0$ when adding up the weights (energies).
Alternatively, we can set these weights to $0$ and add a constraint to
choose exactly one rotamer from each set, which is what we do.
Thus each diagonal block of $E$, of size $m_{i}$, can be assumed to be
a diagonal matrix. We can make this simplification without loss of generality since we are looking to only choose one rotamer per set $\cV_{i}$.
\index{$E$, edge weights}
\index{edge weights, $E$}
\index{$\cV$}

\index{$p^*_{\IQPp}$, optimal value of \IQP}
\index{optimal value of \IQP, $p^*_{\IQPp}$}

We are looking to solve the following integer quadratic program over the indicator vector $x$:
\begin{equation}
\label{prob:original form}
\begin{array}{rl}
p^*_{\IQPp} := 
\min\limits_x \  & \sum\limits_{u,v} E_{uv} x_{u} x_{v} \\
\text{s.t.} \ & \sum\limits_{u \in \cV _{k}} x_{u} = 1, \ \ k= 1, \dots, p \\
& x = 
 \begin{bmatrix}
 w_{1}^{T}  & w_{2}^{T} & \dots & w_{p}^{T} 
 \end{bmatrix}^T   \\
& w_{i} \in \{ 0,1 \}^{m_{i}}, \ i = 1,\dots, p.
\end{array}
\end{equation}
The constrains in \eqref{prob:original form} forces that exactly one element  of $v_{i}$ is set to be $1$, consequently modelling that exactly one rotamer is chosen for each rotamer set $\cV_i$.
We construct the block diagonal matrix
\index{$A$}
\index{$n_0$, total number of rotamers}
\index{total number of rotamers, $n_0$}
\begin{equation}
\label{eq:def_A}
A = 
\textdef{$\BlkDiag$}(  \bar{e}_{m_{1}}^{T} , \bar{e}_{m_{2}}^{T} ,\cdots
,\bar{e}_{m_{p}}^{T} )
\in \mathbb{R}^{p\times n_0} .
\end{equation}
We then use $Ax=b$ to work with a concise representation of the first equality constraint in~\eqref{prob:original form}.
Finally, we obtain the following representation of the \SCP problem:
\begin{equation}
\label{problem:IQP}
(\textdef{\IQPp}) \qquad
\begin{array}{ccl}
p^*_{\IQPp} \ = &  \min\limits_x \ & x^{T}Ex\\
&\text{ s.t. }  & Ax = \bar{e}_{p}  \\
&& x  \in \{ 0,1\}^{n_{0}} . \\
\end{array}
\end{equation}

\subsection{\SDP Relaxation}
\label{sec:SDP_Relaxation}

\index{$\hat{E}$} 
\index{$E_{00}$}
\index{$e_0$, the first unit vector}
\index{the first unit vector, $e_0$}

The problem \eqref{problem:IQP} is NP-hard and hence we resort to a relaxation.
We define 
\[
\hat{E} := \BlkDiag(0,E)  \in \mathbb{S}^{n_0+1}, \ \ 
E_{00} := e_0 e_0^T \in \mathbb{S}^{n_0+1},
\]
where $e_0$ is the first unit vector.
In this section we aim to obtain the following \SDP relaxation to the discrete optimization problem~\eqref{problem:IQP}:
\begin{equation}
(\textdef{\SDPp}) \qquad
\label{eq:SDP_FR_orig}
\begin{array}{rl}
p^*_{\SDPp} \ :=  \
\min\limits_{R,Y} & \trace(\hat{E}Y) \\
& G_{\hat \cJ}(Y) = E_{00} \\
& Y = VRV^T \\
& R \in \mathbb{S}^{n_0+1-p}_{+} ,  \\
\end{array}
\end{equation}
where $G_{\hat \cJ}( \cdot )$ and $V$ are explained in \Cref{sec:GangsterConst} and \Cref{sec:FRconst}, respectively.  
A variant of the relaxation \eqref{eq:SDP_FR_orig} is proposed
by~\cite{ForbesVrisWolk:11} via Lagrangian relaxation to \eqref{problem:IQP}.
Here we present a simpler derivation of the model \eqref{eq:SDP_FR_orig} via 
a simple direct lifting.

The first step for deriving the \SDP relaxation \eqref{eq:SDP_FR_orig} is to lift the variable dimension.
Given~$x\in \R^{n_0}$, we lift to symmetric matrix space
using the rank-one \emph{lifted matrix} 
\index{$Y_x$}
\[
Y_x 
:= \begin{bmatrix} 1\\x  \end{bmatrix}
\begin{bmatrix} 1\\x  \end{bmatrix}^T
= \begin{bmatrix} 1 & x^T \\x & xx^T  \end{bmatrix} \in \mathbb{S}^{n_0+1}.
\]
For the \SDP relaxation, we index the rows and columns \emph{starting from $0$}, i.e., the row and column indices are $\{0,1,\ldots, n_0\}$.
This lifting allows for an alternative representation of the objective function 
\[
x^TEx 
= \left\<  
\begin{bmatrix} 0 & 0 \\ 0 & E \end{bmatrix} , 
 \begin{bmatrix} 1\\x  \end{bmatrix}
\begin{bmatrix} 1\\x  \end{bmatrix}^T
\right\>
= \left\< \hat{E}, Y_x \right\> . 
\]
In the remaining of this section we show 
how this lifting process gives rise to the constraints of the model \eqref{eq:SDP_FR_orig}:
\begin{enumerate}
\item
the  linear (gangster) constraint $G_{\hat \cJ}(Y) = E_{00}$ (\Cref{sec:GangsterConst});
\item
$Y = VRV^T$, where $R \in \mathbb{S}^{n_0+1-p}_{+}$ (\Cref{sec:FRconst}).
\end{enumerate}

\subsubsection{Gangster Constraint $G_{\hat \cJ}(Y) = E_{00}$}
\label{sec:GangsterConst}

Given a matrix $W \in \mathbb{S}^{n_0}$, we define the set of indices  
\index{$\cJ$}
\[
\hspace{-0.3cm}
\cJ : = 
\left\{
\left( \sum_{i=1}^j{m}_{i-1}+ k  ,\ \sum_{i=1}^j{m}_{i-1} + \ell \right) :
j \in \{1,\ldots,p-1\}, \  k,\ell \in \{ 2,\ldots,m_i-1 \}, \ k \ne \ell
\right\} .
\]
Here, $m_i$ is the cardinality of rotamer set $\cV _i$, 
and $m_0 = 0$.
In other words, 
$\cJ$ is the set of off-diagonal indices of the $m_i$-by-$m_i$ diagonal 
\begin{figure}[h]
\centering
\includegraphics[height=3.7cm]{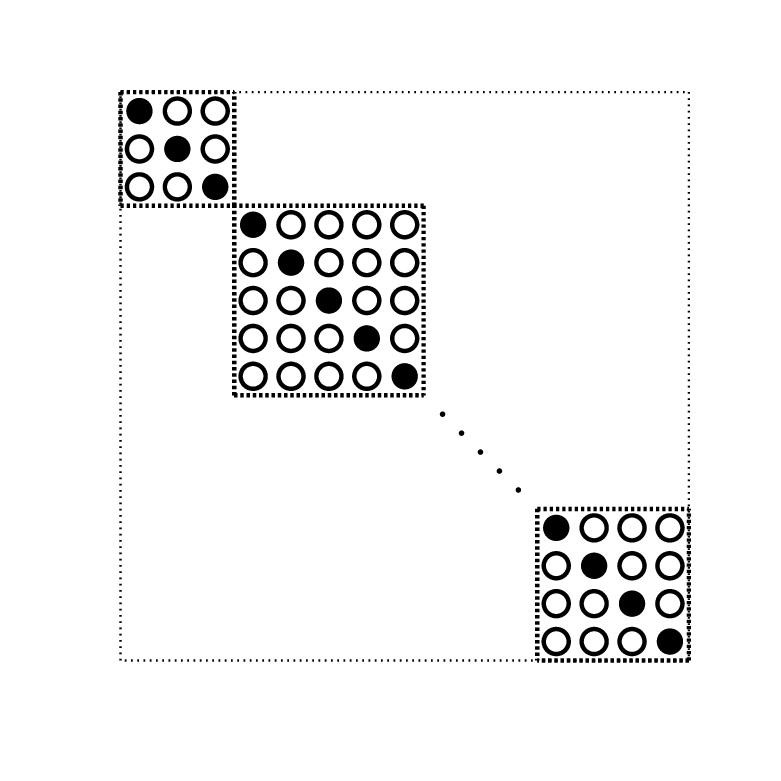}
\vspace{-0.5cm}
\caption{An illustration of the index set $\cJ$ of $0$'s; members of $\cJ$ correspond to the off-diagonal elements of diagonal blocks of $W$ indicated by the symbol \begin{large}$\circ$\end{large}.}
\label{fig:Gangsterinds}
\end{figure}
blocks of $W\in \mathbb{S}^{n_0}$; see \Cref{fig:Gangsterinds} for a visual illustration of the positioning of these indices.
Note that these indices correspond to exactly 
\[
W_{uv}=x_ux_v=0, u\neq v, u,v\in \cV_i,
\]
i.e.,~the constraint on any two distinct rotamers in the same rotamer set cannot be chosen. 

With the above set of indices, we define the mapping
\index{$G_{\cJ}$, gangster operator}
\index{gangster operator, $G_{\cJ}$}
\[
G_{\cJ} : \mathbb{S}^{n_0} \to \R^{|\cJ|}  \ \ \text{ by }
G_{\cJ}(W) = \left( W_{ij} \right)_{ij\in \cJ} .
\]
By abuse of notation, we also view the mapping $G_\cJ$ as an operator from $\mathbb{S}^{n_0} $ to $\mathbb{S}^{n_0}$ to mean 
\[
G_{\cJ} : \mathbb{S}^{n_0} \to \mathbb{S}^{n_0}  ,  \ \  
(G_{\cJ}(W))_{i,j} = 
\bigg\{ 
\begin{array}{cl}
W_{i,j} & \text{if }(i,j) \text{ or } (j,i)\in \cJ, \\
0       & \text{otherwise}.
\end{array}
\]
The map $G_{\cJ}$ can also be viewed as the operator on $\Sc^{n_0}$
defined by $G_{\cJ}(W) = (A^T A - I) \circ W $ with $A$ defined in \eqref{eq:def_A}.
Recall $\circ$ is the element-wise matrix product.
In plain words, $G_{\cJ}(W)$ is the projection that
chooses elements of $W$ corresponding to the index set $\cJ$. 
The constraint $G_{\cJ}(W)=0$ is often called 
the \textdef{gangster constraint} and it is due to the fact that 
elements of $W$ associated with $\cJ$ are set to be zero~(shoots holes in the matrix).

\index{$\hat{\cJ}$}
We now define the set of pairs of indices 
\[
\hat{\cJ} := \{(0,0)\} \cup \cJ \subset \{0,1,\ldots, n_0\}\times \{0,1,\ldots, n_0\}
\]
to directly work with the lifted variable in $\mathbb{S}^{n_0+1}$.
We define the analogous mapping $G_{\hat{\cJ}} $ with $\hat{\cJ}$:
\[
G_{\hat{\cJ}} : \mathbb{S}^{n_0+1} \to \R^{|\hat{\cJ}|}  \ \ \text{ by } \ G_{\hat{\cJ}}(Y) = \left( Y_{ij} \right)_{ij\in \hat{\cJ}} .
\]
This yields the \textdef{gangster constraint} in projection and operator
equivalent forms, respectively,
\[
G_{\hat{\cJ}} (Y) = e_0 \in \R^{1+|\cJ|} , \quad G_{\hat{\cJ}} (Y) = E_{00}.
\]

\index{$G_{\hat{\cJ}} $, gangster operator}
\index{gangster operator, $G_{\hat{\cJ}} $}

\subsubsection{Facial Reduction}
\label{sec:FRconst}

We now derive the constraint $Y = VRV^T$ and $ R \in 
\mathbb{S}^{n_0+1-p}_{+}$. 
Let $x$ be a feasible solution to~\eqref{problem:IQP} and we observe the following implications:
\index{$K$}
\[
\begin{array}{rl}
Ax = \bar{e}_p 
& \implies \begin{bmatrix} 1\\x \end{bmatrix}^T \begin{bmatrix} - \bar{e}_{p}^T\\ A^T\end{bmatrix} = 0 \\
& \implies
\begin{bmatrix} 1\\x \end{bmatrix} \begin{bmatrix} 1\\x \end{bmatrix}^T \begin{bmatrix} - \bar{e}_{p}^T\\A^T\end{bmatrix} \begin{bmatrix} -\bar{e}_{p}^T\\A^T\end{bmatrix}^T  = 0 \\
& \implies \left\< 
\underbrace{
\begin{bmatrix} 1\\x \end{bmatrix} \begin{bmatrix} 1\\x \end{bmatrix}^T }_{=Y_x} \ , 
\underbrace{
\begin{bmatrix} - \bar{e}_{p}^T\\A^T\end{bmatrix} \begin{bmatrix} -\bar{e}_p^T \\ A^T\end{bmatrix}^T }_{=:K}  \right\rangle =0.
\end{array}
\]
Since both arguments in the last inner product are positive semidefinite, 
we obtain the useful property:
\begin{equation}
\label{eq:FRimplications}
\<K,Y_x\> = 0 
\implies KY_x =0 
\implies \range(Y_x) \subseteq \Null(K).
\end{equation}
In other words, $\Null(K)$ captures the range that the feasible points can have.

We now exploit the property \eqref{eq:FRimplications} to restrict the range of the variable. 
We find a full-column rank matrix 
$V \in \R^{(n_0+1)\times (n_0+1-p) }$ such that 
\[
\range(V)
= \Null(K)
= \Null\left(
 \begin{bmatrix} -\bar{e}_p^T \\ A^T\end{bmatrix}^T \right)  .
\]
For our purposes we choose $V$ with normalized columns.
Since $A$ is full-row rank, we get that $\rank(K) = p$. 
Finally, we can capture any feasible $Y_x$ using $V$:
\[
Y_x \in V  \mathbb{S}^{n_0+1-p}_{+} V^T.
\]
This is the well-known \textdef{facial reduction} technique, see
e.g.,~\cite{DrusWolk:16}.
The matrix $K$ functions as an \textdef{exposing vector} for the feasible set.
The matrix $V$ is known as a facial range vector.

The remaining step for the \SDP relaxation is simple. 
We note that $\rank( Y_x)=1$ and this leaves the feasible region nonconvex. 
We discard the rank restriction on the variable $Y_x$ to work with a convex feasible region and the variable of the form
\[
Y = VRV^T \text{ where }  R \in \mathbb{S}^{n_0+1-p}_{+} . 
\]
This completes the derivation of the relaxation in~\eqref{eq:SDP_FR_orig}.
It is known that there is a $\hat{R} \in \mathbb{S}^{n_0+1-p}_{++}$  feasible to \eqref{eq:SDP_FR_orig}; see \cite{ForbesVrisWolk:11}.

\subsection{\DNN Relaxation}
\label{sec:DNN_Relaxation}

We continue with the \SDP relaxation derived in \Cref{sec:SDP_Relaxation} to 
complete our relaxation by adding additional constraints to \eqref{eq:SDP_FR_orig}.
In \Cref{thm:arrow_redundancy} below, we obtain two additional properties of the model \eqref{eq:SDP_FR_orig}.
\begin{thm}
\label{thm:arrow_redundancy}
Suppose that $(R,Y)$ are feasible to \eqref{eq:SDP_FR_orig}.
Then the following hold.
\begin{enumerate}
\item \label{item:arrowEqual}
The first column of $Y$ is equal to the diagonal of $Y$.
\item \label{item:tracd1+p}
$\tr (R) = 1+p$.
\end{enumerate}
\end{thm}

\begin{proof}
We recall that $\range(V) = \Null (K) = \Null \left( \begin{bmatrix} -\bar{e}_p & A \end{bmatrix} \right)$.
Hence we have
\begin{equation}
\label{eq:0RVT=0proof}
\begin{bmatrix} -\bar{e}_p & A \end{bmatrix} Y
= \begin{bmatrix} -\bar{e}_p & A \end{bmatrix} VRV^T = 0 RV^T = 0.
\end{equation}
We then exploit the structure of 
$\begin{bmatrix} -\bar{e}_p & A \end{bmatrix} Y$.
We first partition $Y$ as follows:
\begin{equation}
\label{eq:block_notation}
Y = 
\begin{bmatrix}
1 & Y_{10}^T & Y_{20}^T & \cdots & Y_{p0}^T  \\
Y_{10} & Y_{11}   & Y_{12}   & \cdots & Y_{1p} \\
\vdots & \vdots   & \vdots   &  \vdots & \vdots \\  
Y_{p0} & Y_{p1}   & Y_{p2}   & \cdots & Y_{pp} \\
\end{bmatrix}  \in \Sc^{n_0+1} ,
\end{equation}
where $Y_{ii} \in \Sc^{m_i}$, $Y_{ij} \in \R^{m_i \times m_j}$, $Y_{i0} \in \R^{m_i}, \ \forall i,j \in [p]$.
We use $ Y_{ij}^{\col \ell}$ to denote the $\ell$-th column of the 
$(i,j)$-th block of $Y$ and $Y_{i0,\ell}$ to denote the 
$\ell$-th coordinate of the vector $Y_{i0} \in \R^{m_i}$.
\index{$ Y_{ij}^{\col \ell}$}
Then expanding $\begin{bmatrix} -\bar{e}_p & A \end{bmatrix} Y$ with the block representation \eqref{eq:block_notation} yields
\[
\begin{bmatrix} -\bar{e}_p & A \end{bmatrix} Y
=
\begin{bmatrix} a_0 & A_1 & \cdots & A_p \end{bmatrix} \in \R^{p\times (n_0+1)},
\]
where 
\begin{equation}
\label{defof_a_0proof}
a_0 = 
\begin{bmatrix}
-1 + \bar{e}^T_{m_1} Y_{10} \\
-1 + \bar{e}^T_{m_1} Y_{20} \\
\vdots \\
-1 + \bar{e}^T _{m_p}Y_{p0} \\
\end{bmatrix}  \in \R^{p} , 
\end{equation}
and, for each $i\in [p]$,
\[
A_i = 
\begin{bmatrix}
-Y_{i0,1} + \bar{e}^T_{m_1} Y_{1i}^{\col 1} & -Y_{i0,2} + \bar{e}^T_{m_1} Y_{1i}^{\col 2} 
& \cdots & -Y_{i0,m_i} + \bar{e}^T_{m_1} Y_{1i}^{\col m_i}  \\
\vdots &\vdots &\ddots &\vdots  \\ 
-Y_{i0,1} + \bar{e}^T_{m_j} Y_{ii}^{\col 1} & -Y_{i0,2} + \bar{e}^T_{m_j} Y_{ii}^{\col 2} 
& \cdots & -Y_{i0,m_i} + \bar{e}^T_{m_j} Y_{ii}^{\col m_i}  \\
\vdots &\vdots &\ddots &\vdots  \\ 
-Y_{i0,1} + \bar{e}^T_{m_p} Y_{1p}^{\col 1} & -Y_{i0,2} + \bar{e}^T_{m_p} Y_{1p}^{\col 2} 
& \cdots & -Y_{i0,m_i} + \bar{e}^T_{m_p} Y_{1p}^{\col m_i}  \\
\end{bmatrix}  \in \R^{ p \times m_i}.
\]
By \eqref{eq:0RVT=0proof}, we have $A_i =0, \ \forall i\in [p]$.
Thus, for each $i\in [p]$, the $i$-th row of $A_i$ yields
\[
Y_{i0,\ell} = \bar{e}^T_{m_i} Y_{ii}^{\col \ell}, \  
\ell \in [m_i].
\]
Since $G_{\hat{\cJ}}(Y) = E_{00}$ holds, we see that 
\[
\diag(Y_{ii}) = Y_{i0}, \ \forall i \in [p].
\]
Therefore, we conclude that the first column and the diagonal of $Y$ are identical.

We now show that $\tr (R) = 1+p$. By \eqref{eq:0RVT=0proof}, we have the vector $a_0$ from \eqref{defof_a_0proof} is $0$. Thus, $1=\bar{e}^T_{m_i}$, for all $i=1\ldots,p$. By \Cref{item:arrowEqual}, we obtain
\[
\bar{e}^T_{m_i} Y_{i0} = 1, \ \forall i \in [p].
\]
Since $\diag(Y_{ii}) = Y_{i0}, \ \forall i \in [p]$, we must have that $\trace(Y_{ii}) = 1$, $\forall i \in [p]$.
Hence $Y=VRV^T$ gives 
\[
1+p = \tr (Y) = \tr(VRV^T) = \tr (R),
\]
where the last equality holds since $V^TV = I$.
\end{proof}

\Cref{item:arrowEqual} of \Cref{thm:arrow_redundancy} is known in the literature; see \cite{ForbesVrisWolk:11}.
This property is discovered by using the Lagrangian dual. 
Here, we displayed an alternative derivation that exploits the steps during the direct lifting.

We recall that the original model \eqref{problem:IQP} has the binary constraint on its variable $x$.
We also recall that the direct lifting yields a variable of the form 
$\begin{bmatrix} 1 & x^T \\x & xx^T  \end{bmatrix} \in \Sc^{n_0+1}$.
Hence, we strengthen our model by including the constraint $ Y_{i,j} \in [0,1], \ \forall i,j$.

\index{$\cY$}
\index{$\cR$}
\index{optimal value of \DNN relaxation, $p^*_{\DNNp}$}
\index{$p^*_{\DNNp}$, optimal value of \DNN relaxation}

We define the sets
\[
\begin{array}{rcl}
\cR &:=& \{ R \in \Sc^{n_0+1-p} : R\succeq 0, \ \tr(R) = p+1 \} , \\
\cY &:=& \{ Y \in \Sc^{n_0+1} :  G_{\hat{\cJ}}(Y) = E_{00}, \ 0\le Y \le 1 \} .
\end{array}
\]
By including additional constraints $\tr (R) = 1+p$ and $0\le Y \le 1$ to the \SDP relaxation \eqref{eq:SDP_FR_orig}, we complete our model, the \DNN  relaxation to \eqref{problem:IQP}:
\begin{equation}
\label{eq:SDP_FR}
(\textdef{\DNNp}) \qquad
\begin{array}{rlc}
p^*_{\DNNp} \ := \ \min\limits_{R,Y} & \tr (\hat{E}Y) \\
& Y = VRV^T   \\
& R \in \cR \\
& Y\in \cY . \\
\end{array}
\end{equation}

We remark that both (\DNNp) and (\SDPp) are relaxations to (\IQPp), but 
(\DNNp) is a strengthened model than (\SDPp), i.e.,
\[
p^*_{\SDPp} \le p^*_{\DNNp} \le p^*_{\IQPp} .
\]
We also remark that there are redundant constraints in the model \eqref{eq:SDP_FR}. 
These (implicit) redundant constraints result in numerical instabilities when they are not treated carefully.
In \Cref{sec:DNN_Relaxation} below, we use the splitting method to distribute constraints to two different subproblems. 
We benefit from the usage of the splitting method in two distinct ways; 
we handle the numerically difficult problem into two separate easier subproblems; and
we avoid the numerical instabilities that arise from the redundant constraints.

We demonstrate the strength of (\DNNp) in \Cref{sec:AtightRelaxation}. 
The \DNN relaxation has a linear objective with an \emph{onto} linear equality
constraint, and compact, convex, feasible set constraints.
The first-order optimality conditions for \eqref{eq:SDP_FR} are
\begin{equation}
\label{eq:optcondition}
\begin{array}{lll}
	0&\in  -V^TZV+\NN_{\cR}(R),
            &\quad \text{(dual feasibility with respect to $R$)} \\
	0&\in  \hat{E}+Z+\cN_{\cY}(Y),
            &\quad \text{(dual feasibility with respect to $Y$)} \\
	Y & =   \widehat{V}R\widehat{V}^T,
              \quad R \in \cR, \,  Y \in \cY,
            & \quad \text{(primal feasibility)}
\end{array}
\end{equation}
where
$\cN_{\cR}(R), \cN_{\cY}(Y)$ are the normal cones and $Z$ is a Lagrange multiplier associated with the constraint $Y=VRV^T$.
\Cref{thm:EQzero} below states that some elements of the dual optimal multiplier $Z^*$ are known in advance.
\begin{theorem}
\label{thm:EQzero}
Let $(R^*,Y^*)$ be an optimal pair for~\eqref{eq:SDP_FR}, and let
\[
\textdef{$\ZZ_A$} :=\left\{Z\in\Sc^{n_0+1}: Z_{i,i} = -(\hat{E})_{i,i}, 
\ Z_{0,i}=Z_{i,0}= -(\hat{E})_{0,i} , \ i=1,\ldots ,n_0 \right\}.
\]
Then there exists $Z^*\in\ZZ_A$ such that $(R^*,Y^*,Z^*)$ 
solves \cref{eq:optcondition}.	
\end{theorem}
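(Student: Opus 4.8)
The statement concerns the structure of the optimal multiplier $Z^*$ for the coupling constraint $Y=VRV^T$, so the plan is to show that the (nonempty, convex) set of optimal multipliers meets the affine subspace $\ZZ_A$. I would first record that a multiplier exists at all: after the facial reduction the feasible set is compact, $\cR$ has relative interior $\{R\succ 0:\tr R=p+1\}$, and eliminating $Y=VRV^T$ turns \eqref{eq:SDP_FR} into a conic program in $R$ alone satisfying a Slater-type constraint qualification, so \eqref{eq:optcondition} is solvable; fix one solution $(R^*,Y^*,Z)$.

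Next I would dispose of the dual-$Y$ condition $-(\hat{E}+Z)\in\cN_{\cY}(Y^*)$. Since $\cY=\{Y:G_{\hat{\cJ}}(Y)=E_{00},\ 0\le Y\le 1\}$ is a product of pinned coordinates and intervals, $\cN_{\cY}(Y^*)$ splits coordinatewise: a normal vector is free on the gangster set $\hat{\cJ}$, must vanish where the box is inactive, and is one-sided where a box bound is active. The values imposed by $\ZZ_A$ on the diagonal and first row/column, $Z_{ii}=-\hat{E}_{ii}$ and $Z_{0i}=-\hat{E}_{0i}=0$, are exactly those making the residual $-(\hat{E}+Z)$ vanish on these non-gangster coordinates, and $0$ is always admissible for the coordinatewise normal cone. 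Hence $\ZZ_A$ never conflicts with the dual-$Y$ condition. I would also note, from item~1 of \Cref{thm:arrow_redundancy}, that $Y^*_{ii}=Y^*_{i0}$, so a diagonal entry and its matching first-row entry share the same box-activity; in particular no adjustment is needed at the strictly interior coordinates, where $Z$ already equals the target.

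The crux is the dual-$R$ condition $V^TZ^*V\in\cN_{\cR}(R^*)$, which sees $Z^*$ only through $V^T(\cdot)V$. I would set $Z^*=Z+\Delta_{\mathcal P}+\Delta_{\hat{\cJ}}$, where $\Delta_{\mathcal P}$ is supported on the diagonal and first row/column and carries those entries of $Z$ to the $\ZZ_A$ targets (nonzero only at box-active coordinates), while $\Delta_{\hat{\cJ}}$ is supported on the gangster set and is free to choose, being unconstrained in the dual-$Y$ condition and invisible to $\ZZ_A$. This is where \Cref{thm:arrow_redundancy} does the work: its proof exhibits the arrow relation ``first column $=$ diagonal'' as a consequence of the facial identity $\begin{bmatrix}-\bar{e}_p&A\end{bmatrix}Y=0$ together with the gangster equations. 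Dualizing that redundancy, the arrow-type adjustment decomposes as $\Delta_{\mathcal P}=F+H$, where $F$ lies in the range of the facial adjoint and $H$ is supported on $\hat{\cJ}$. Because $\begin{bmatrix}-\bar{e}_p&A\end{bmatrix}V=0$ gives $V^TFV=0$, the choice $\Delta_{\hat{\cJ}}=-H$ yields $Z^*=Z+F$ with $V^TZ^*V$ differing from $V^TZV$ only by the multiple of the identity produced by the trace part (item~2 of \Cref{thm:arrow_redundancy}); that identity component is absorbed by $\spanl\{I\}\subseteq\cN_{\cR}(R^*)$, the contribution of $\tr R=p+1$ to the normal cone. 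Hence $V^TZ^*V\in\cN_{\cR}(R^*)$ and $Z^*\in\ZZ_A$ solves \eqref{eq:optcondition}.

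The main obstacle is this decomposition-and-absorption step together with its compatibility with the dual-$Y$ condition at \emph{all} coordinates: one must choose the facial-range correction $F$ so that $-(\hat{E}+Z+F)$ remains in $\cN_{\cY}(Y^*)$—in particular $F$ must vanish at the coordinates where a box constraint is strictly inactive and be correctly signed where it is active—while still realizing the prescribed diagonal and first-row values. The relations $Y^*_{ii}=Y^*_{i0}$ (so matching entries share their activity) and the restriction of $\Delta_{\mathcal P}$ to box-active coordinates are what make this possible, but confirming that complementary slackness with $R^*$ is undisturbed throughout is the delicate heart of the argument; by contrast, the existence of a multiplier and the dual-$Y$ admissibility of the $\ZZ_A$ targets are routine.
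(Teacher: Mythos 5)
Your preliminary steps (existence of a multiplier via a Slater-type argument, and the observation that the $\ZZ_A$ values are always admissible for the coordinatewise normal cone $\cN_{\cY}(Y^*)$, since a zero residual lies in $\cN_{[0,1]}(t)$ for every $t\in[0,1]$) are sound. The gap is in your crux step: the claimed decomposition $\Delta_{\mathcal P}=F+H$, with $F$ in the symmetrized range of the facial adjoint (so $V^TFV=0$) and $H$ supported on the gangster set, does \emph{not} exist for an arbitrary arrow-supported correction. Dualizing \Cref{thm:arrow_redundancy} only places the specific functionals $Y\mapsto Y_{ii}-Y_{i0}$ in the subspace $\left\{B^TM+M^TB : M\in\R^{p\times(n_0+1)}\right\}+\left\{H\in\Sc^{n_0+1}:\supp(H)\subseteq\hat{\cJ}\right\}$, where $B=\begin{bmatrix}-\bar{e}_p & A\end{bmatrix}$. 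A direct computation shows that an arrow-supported matrix with diagonal values $d_v$ and first-row values $c_v$ lies in this sum if and only if $c_v+d_v/2$ is \emph{constant on each rotamer set} $\cV_k$; this is a proper subspace (dimension $n_0+p$) of the $2n_0$-dimensional space of arrow corrections. Your $\Delta_{\mathcal P}$ prescribes $d_v=-\hat{E}_{vv}-Z_{vv}$ and $c_v=-Z_{0v}$ independently at box-active coordinates and $0$ at inactive ones; for a block containing an inactive coordinate, block-constancy would force $-Z_{0v}=(\hat{E}_{vv}+Z_{vv})/2$ at its active coordinates, a quantitative relation that nothing in \eqref{eq:optcondition} supplies. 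The fallback clause, that $V^TZ^*V$ differs from $V^TZV$ "only by the multiple of the identity produced by the trace part," is likewise unsubstantiated: item 2 of \Cref{thm:arrow_redundancy} concerns $\tr(R)$ and does not generate an identity component in your decomposition.

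The argument that does work — and which is, in substance, how the proof cited by the paper ([GHILW:20, Theorem 2.11]) uses the optimality conditions together with \Cref{thm:arrow_redundancy} — avoids constructing corrections to a given $Z$ altogether. The proof of \Cref{thm:arrow_redundancy} shows that the box constraints at the arrow coordinates are \emph{redundant}: $Y=VRV^T$ with $R\succeq 0$ and the gangster constraint already force $Y_{i0}=\diag(Y_{ii})\ge 0$ with $\bar{e}^TY_{i0}=1$, hence all arrow entries lie in $[0,1]$ automatically. So one may enlarge $\cY$ to a set $\cY'$ that drops the box constraints at every arrow coordinate without changing the feasible set of \eqref{eq:SDP_FR}; the pair $(R^*,Y^*)$ remains optimal. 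For the enlarged problem, the arrow coordinates are unconstrained, so the corresponding components of $\cN_{\cY'}(Y^*)$ are $\{0\}$, and \emph{any} multiplier $Z^*$ for the enlarged problem is forced to satisfy $(\hat{E}+Z^*)_{ii}=0$ and $(\hat{E}+Z^*)_{0i}=0$, i.e., $Z^*\in\ZZ_A$. Since $\cY\subseteq\cY'$ implies $\cN_{\cY'}(Y^*)\subseteq\cN_{\cY}(Y^*)$, this same $Z^*$ solves \eqref{eq:optcondition} for the original problem. Your Slater-type existence argument is then invoked once, for the enlarged problem, and the delicate complementarity bookkeeping you flag as "the heart of the argument" disappears entirely.
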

\begin{proof}
The proof uses the optimality conditions \eqref{eq:optcondition} and  \Cref{thm:arrow_redundancy}.
The proof can be found in~\cite[Theorem 2.11]{GHILW:20}.
\end{proof}

\section{The Algorithm}
\label{sec:Algorithm}

In this section we present the algorithm for solving the \DNN relaxation  \eqref{eq:SDP_FR}.
For $\beta >0$, we define the augmented Lagrangian $\cL_A$ of the model \eqref{eq:SDP_FR}:
\index{$\cL_A(R,Y,Z)$, augmented Lagrangian}
\index{augmented Lagrangian, $\cL_A(R,Y,Z)$}
\begin{equation}
\label{eq:aug_Lagrangian}
\cL_A(R,Y,Z) :=
\<\hat{E},Y\> + \<Z,Y-VRV^T\> + \frac{\beta}{2}\|Y-VRV^T\|_F^2.
\end{equation}
We define the projection operator $\cP_{\cZ_0}(Z)$ onto the set 
\[
\cZ_0 = \left\{Z\in\Sc^{n_0+1}: 
Z_{i,i} = \ Z_{0,i}=Z_{i,0}= 0 , \ i=1,\ldots ,n_0 \right\} .
\]
In other words, the projection operator $\cP_{\cZ_0}(Z)$ sets the first column, first row and and the diagonal elements of $Z$ to be $0$, except for the $(0,0)$-th entry.

\index{$\cP_{\mathcal{Z}_0}(Z)$}
\index{\PRSMp, Peaceman-Rachford splitting method}
We use the \textdef{restricted dual Peaceman-Rachford splitting method, \rPRSM} (\Cref{algo:PRSM_algorithm}), a variation of the strictly contractive
Peaceman-Rachford splitting method to solve the model~\eqref{eq:SDP_FR}.
\begin{algorithm}[h!]  
\caption{\rPRSM \cite{GHILW:20} for solving \eqref{eq:SDP_FR} }
\label{algo:PRSM_algorithm} 
\begin{algorithmic}
\STATE \textbf{Initialize:} $Y^0 \in \Sc^{n_0+1}$, $Z^0\in \mathcal{Z}_A$, $\beta\in (0,\infty), \gamma \in (0,1)$
\WHILE {termination criteria are not met}
\STATE $R^{k+1} = \argmin\limits_{R\in \cR} \LL_A (R,Y^k,Z^k)$
\STATE $Z^{k+\frac{1}{2}} = Z^{k} + \gamma  \beta \cdot \cP_{\mathcal{Z}_0}\left( Y^{k}- V R^{k+1} V^T \right)$ 
\STATE $Y^{k+1} = \argmin\limits_{Y\in \mathcal{Y} } \LL_A (R^{k+1},Y,Z^{k+\frac{1}{2}}) $
\STATE $Z^{k+1} = Z^{k+\frac{1}{2}} + \gamma  \beta \cdot \cP_{\mathcal{Z}_0}\left( Y^{k+1}- V R^{k+1} V^T \right)$ 
\ENDWHILE
\end{algorithmic}
\end{algorithm}
We note that the ordinary \PRSM updates the dual multipliers without the projection operator $\cP_{\mathcal{Z}_0}$.
The projection on the dual multiplier~$Z$ is motivated from an endeavour to have a better dual multiplier at each iteration. 
We recall that some of the elements of the optimal dual multipliers are known by \Cref{thm:EQzero}. 
The algorithm fixes these known elements to be the optimal elements at every iteration.
We leave the details of the convergence proof of \rPRSM scheme to~\cite[Theorem 3.2]{GHILW:20}.

\begin{remark}
The model \eqref{eq:SDP_FR} can be solved by using a standard \SDP solver.
The nonnegativity of each element of $Y$ is considered using cutting planes in \cite{ForbesVrisWolk:11}.
However, this approach becomes more computationally challenging as the number of cutting planes increases. 
Splitting methods engage the polyhedral constraints $0 \le Y \le 1$ in an economic manner.
We incorporate the positive semidefinite constraint and the nonnegativity constraint very efficiently.
We deal with the positive semidefinite and trace
constraint in the $R$-subproblem,
and then deal with the interval and gangster constraints in the $Y$-subproblem.
\end{remark}

\subsection{Update Formulae}
\label{sec:Update_Formulae}

In this section we present the formulae for the $R$ and $Y$ updates in \Cref{algo:PRSM_algorithm}. 
The update rules are discussed in~\cite{GHILW:20}. We include the formulae for completeness.

\subsubsection{$R$-Update}
\label{sec:R_update}

In this section we present the update rule for the $R$-subproblem.
The formula for the $R$-subproblem, with $\cL_A$ defined in \eqref{eq:aug_Lagrangian}, is as follows: 
\[
\begin{array}{rcl}
R^{k+1} 
& = & \argmin\limits_{R\in \cR} \cL_A(R,Y^{k},Z^{k} ) \\
& = & \argmin\limits_{R\in \cR} \left\| Y^{k} - VRV^T + \frac{1}{\beta}Z^{k} \right\|_F^2 \\
& = & \argmin\limits_{R\in \cR} \left\| R - V^T (Y^{k}+\frac{1}{\beta}Z^{k} )V \right\|_F^2 \\
& = & \cP_{\cR} \left( V^T \left( Y^{k} + \frac{1}{\beta}Z^{k} \right) V \right) \\
& = & U \ \Diag\left( \cP_{\Delta_{p+1}} (d) \right) \ U^T ,
\end{array}
\]
where the second equality holds by completing the square; the third
equality holds due to $V^TV = I$; and the last equality follows from the eigenvalue decomposition
\[
V^T \left( Y^{k} + \frac{1}{\beta}Z^{k} \right) V 
= U \Diag(d) U^T , 
\]
and $ \cP_{\Delta_{p+1}} (\cdot ) $ is the projection operator onto the simplex $\Delta_{p+1} = \{ z\in \R^{n_0+1-p} : \bar{e}^T z = 1+p \}$.

\subsubsection{$Y$-Update}
\label{sec:Y_update}
The update rule for $Y$ is as follows:
\begin{equation} 
\label{Y_update_complete_sqaure}
\begin{array}{rcl}
Y^{k+1} 
& = & \argmin\limits_{Y\in \cY} \cL_A(R^{k+1},Y,Z^{k+\frac{1}{2}} ) \\
& = & \argmin\limits_{Y\in \cY} \left\| Y - \left( V R^{k+1} V^T - \frac{1}{\beta} (\hat{E}+Z^{k+\frac{1}{2}} ) \right)  \right\|_F^2 \\
& = & \cP_{\text{box}} \left( G_{\hat{\cJ}^c } \left( V R^{k+1} V^T - \frac{1}{\beta} (\hat{E}+Z^{k+\frac{1}{2}} ) \right)  \right),
\end{array}
\end{equation}
where $\cP_{\text{box}}$ is the projection onto the polyhedral set $\{Y
\in \Sc^{n_0+1} : 0\le Y \le 1\}$. 

\index{$\cP_{\text{box}}$}

\subsection{Bounding}
\label{sec:Bounding}

In this section we present some strategies for computing lower and upper bounds to (\IQPp).

\subsubsection{Lower Bounds from Lagrange Relaxation}
\label{sec:Lower_Bound}

We now discuss a strategy for computing a valid lower bound to $p^*_{\IQPp}$.
Exact solutions of the \DNN relaxation~\eqref{eq:SDP_FR} provide lower bounds to (\IQPp).
However, we often terminate algorithms when the stopping criteria are met for a pre-defined tolerance and we never set the tolerance to be exactly $0$ in practice. 
A near optimal point $\tilde{Y}$ can result in 
\[
p^*_{\DNN} \le \<\hat{E},\tilde{Y}\> \ \text{ and } \
p^*_{\IQP} < \<\hat{E},\tilde{Y}\> 
\]
and produce an invalid lower bound to $p^*_\IQPp$.
Hence, we provide a method for computing a \emph{valid lower bound} to (\IQPp) for avoiding this issue.

We follow the approaches in \cite{GHILW:20,OliveiraWolkXu:15,Eckstein:2020} and obtain lower bounds via the dual to the \DNN
relaxation in \eqref{eq:SDP_FR}.
We define the dual functional $g:\Sc^{n_0+1} \to \R$ by
\index{$g$, dual functional}
\index{dual functional, $g$}
\[
g(Z) := \min\limits_{R\in \cR, Y\in \cY }\<\hat{E},Y\rangle + \< Z,Y-VRV^T\> .
\]
Let $\bar{Z}\in \bS^{n_0+1}$ be given. 
W note that
\[
\begin{array}{rl}
\min\limits_{R\in \cR, Y\in \cY } \<\hat{E},Y\rangle + \< \bar{Z} , Y-VRV^T\> 
& = \min\limits_{Y\in \mathcal{Y} } \<\hat{E}+\bar{Z},Y\>  + \min\limits_{R\in \RR} \<-V^T \bar{Z} V,R\>  \\
& = \min\limits_{Y\in \mathcal{Y} } \<\hat{E}+ \bar{Z} ,Y\> - (p+1) \lambda_{\max}(V^T \bar{Z} V),
\end{array}
\]
where \textdef{$\lambda_{\max}$} is the maximum eigenvalue function.
Hence we compute a valid lower bound to the optimal value $p^*_{\DNN}$ of the model \eqref{eq:SDP_FR} by using weak duality:
\begin{equation*}
\label{eq:lbd_computation}
p^*_{\DNN} = \max\limits_Z  g(Z) 
\ge g(Z) 
= \min\limits_{Y\in \mathcal{Y} } \<\hat{E}+Z,Y\> - (p+1) \lambda_{\max}(V^TZV),
\end{equation*}
where the first equality holds since the constraint qualification holds for the model \eqref{eq:SDP_FR}.
We note that the computation for $\min\limits_{Y\in \mathcal{Y} } \<\hat{E}+Z,Y \rangle$ is inexpensive.



\subsubsection{Upper Bounds from Nearest Binary Feasible Solutions}
\label{sec:Upper_Bound}

In this section we discuss two strategies for computing upper bounds to 
the \SCP problem.  These strategies are derived from those presented 
in~\cite{ForbesVrisWolk:11} and we include them here for completeness.
We obtain upper bounds by finding feasible solutions to the original
integer model in \eqref{problem:IQP}.
Let $(\Rout,\Yout,\Zout)$ be the output of the algorithm.
\begin{enumerate}
\item
Let $\xapprox \in \R^{n_0}$ be the second through to the last 
elements of the first column of $\Yout$. 
Note that $0\leq \xapprox \leq 1$.
Then the nearest feasible solution to (\IQPp) from $\xapprox$ can be found by solving the following projection:
\begin{equation}
\label{eq:projection_onto_fea}
\min_x \left\{ \|x - \xapprox\|^2 \ : \ Ax = \bar{e}_p, \ x\in \{0,1\}^{n_0}  \right\}.
\end{equation}
It is shown in~\cite{ForbesVrisWolk:11} that 
solving \eqref{eq:projection_onto_fea} is equivalent to solving the
following \emph{linear program}:
\begin{equation}
\label{eq:nearest_sol_LP}
\min_x \left\{ \<x , \xapprox \> \ : \ Ax = \bar{e}_p, \ x \ge 0  \right\}.
\end{equation}
\item
We now let $\xapprox$ be the second through to the last elements of the most dominant eigenvector of $\Yout$.
Note that we again have $0\leq \xapprox \leq 1$, by the Perron-Frobenius
theorem. We again obtain the nearest feasible solution to $\xapprox$ by
solving \eqref{eq:nearest_sol_LP}.
\end{enumerate}

\begin{remark}
In fact, solving \eqref{eq:nearest_sol_LP} does not require using any LP software; we can obtain the optimal solution for \eqref{eq:nearest_sol_LP} as follows.
We partition $\xapprox$ into $p$ subvectors of sizes $m_i=|\cV_i|$, for $i=1,\ldots,p$.
Let $x^{i}\in \R^{m_i}$ be the subvector of $\xapprox$ associated with $i$-th rotamer set $\cV_i$, i.e., 
$\xapprox = [ x^{1};x^{2}; \ldots ;x^{p}]$. 
We define $\hat{x}^i \in \R^{m_i}$ as follows:
\[
\hat{x}^i_j 
= \left\{ \begin{array}{ll}
1 , & \text{if } x^i_j = \max\limits_{\ell \in [m_i] } \left\{x^i_\ell \right\}   \\
0, & \text{otherwise}.\\
\end{array} \right.
\]
If there is subvector $ \hat{x}^{i}$ with more than one $1$ in its components, we pick only one $1$ and set the remaining to be $0$.
We then form $\hat{x} = [\hat{x}^1;\hat{x}^2; \ldots;\hat{x}^p ] \in \R^{n_0}$.
It is clear that $\hat{x}$ is feasible for~\eqref{prob:original form}.
We use $\hat{x}^T E \hat{x}$ as an upper bound to the \SCP problem.
\end{remark}

\section{Numerical Experiments with Real-World Data}
\label{sec:Numerical_Experiments}

We present the numerical experiments for \Cref{algo:PRSM_algorithm}.
This section is organized as follows.
In \Cref{sec:stopping_Criteria} we present the parameter settings and stopping criteria. 
In \Cref{sec:Data} we explain how we process the data from the Protein Data Bank (PDB) to obtain the energy matrix $E$.
In \Cref{sec:Experiments} we finally present the numerical results using
\rPRSM and show that we provably solve many instances to optimality.
We use the bounding strategies presented in \Cref{sec:Bounding} 
to prove optimality.


\index{Protein Data Bank, PDB}
\index{PDB, Protein Data Bank}

\subsection{Stopping Criteria and Parameter Settings}
\label{sec:stopping_Criteria}

\paragraph{Stopping Criteria}

We terminate \rPRSM when either of the following conditions is satisfied. 
\begin{enumerate}
\item 
Maximum number of iterations, denoted by ``maxiter'' is achieved. 

\item 
For given tolerance $\epsilon$, the following bound on the primal and 
dual residuals holds for \textdef{$s_t$} sequential times:
\[
\max \left\{ 
\frac{\|Y^k- VR^k V^T\|_F}{\|Y^k\|_F}  \
\beta \|Y^{k}-Y^{k-1}\|_F 
\right\} <\epsilon.
\]

\item 
Let $\{ l_1,\ldots, l_k \}$ and  $\{ u_1,\ldots,u_k \}$
be sequences of lower and upper bounds discussed in \Cref{sec:Lower_Bound} and \Cref{sec:Upper_Bound}, respectively.
Any of the lower bounds achieve the best upper bound, i.e., 
\[
\min \{ l_1,\ldots, l_k \} \ge \max \{ u_1,\ldots,u_k \} .
\]

\end{enumerate}

\paragraph{Parameter Settings}

We use the following parameters related to the implementation of \Cref{algo:PRSM_algorithm}:
\[
\beta = \max\{ \lfloor 0.5*n_0/p \rfloor,1\},  \ \
\gamma = 0.99 .
\]
The parameters related to stopping criteria are:
\[
\text{maxiter} = p(n_0+1) + 10^4 , \ \
\epsilon = 10^{-10}, \ \ 
s_t = 100.
\]
For the initial iterates for \rPRSMp, we use 
\[
Y^0= 0 , \ Z^0 = \cP_{\cZ_A} (Y^0) .
\]

\subsection{Energy Matrix Computation}
\label{sec:Data}

In this section we briefly describe the process for acquiring the energy matrix $E$. 
Our implementation relies on the usage of a Python script executing as an extension of the UCSF Chimera\footnote{The UCSF Chimera software can be found in \url{https://www.cgl.ucsf.edu/chimera/download.html}.} application. A detailed implementation can be found in \cite[Chapter 7]{BurkowskiForbesJ2015CaVT}.
We used protein data files from the PDB to obtain the coordinates of all atoms in the protein.  To get the energy values required by the  algorithm, the native side chain conformations were replaced by rotamers extracted from a rotamer library provided by the Dunbrack Laboratory \cite{Dunbrack341254}.

Some approaches use an energy evaluation based on a piece-wise linear approximation of the Lennard-Jones potential formula
(e.g., \cite{XuJinbo2006Faaa,CanutescuAdrianA2003Agaf}).
Here, we used the Lennard-Jones potential formula, which provides a more accurate energy value computation. 
In brief, the Lennard-Jones potential formula engages the Euclidean distance between a pair of atoms with some parameters dependant on the type of amino acids. 
A more detailed explanation of these energy computations can be found in \cite[Chapter 6-7]{BurkowskiForbesJ2015CaVT}.
\index{dead end elimination}
We finally used a strategy (known as `dead end elimination') to reduce the size of the rotamer sets associated with each amino acid. 
The basic idea behind this strategy is that a rotamer can be removed from its rotamer set if there is another rotamer in that set that gives a better energy value regardless of the rotamer selections for the neighbouring amino acids. 
Among various approaches for the dead end elimination, we followed the Goldstein's criteria \cite{Goldstein:94}.

Let $\cU$ be a side-chain conformation of a protein.
The energy of the conformation $\cU$ is
\[
E(\cU) = \sum_{i=1}^{n_0} E_{\text{self}} (u_i) + \sum_{i=1}^{n_0-1}\sum_{j=i+1}^{n_0} E_{\text{pair}} (u_i,u_j) , 
\]
where $u_i$ is a side-chain conformation of an amino acid,
$E_{\text{self}} (u_i)$ is the energy corresponding to $u_i$ and the backbone, 
and $E_{\text{pair}} (u_i,u_j)$ is the energy formed by $u_i$ and  $u_j$, a rotamer associated with a neighbouring amino acid. 
In our formulation, we placed $E_{\text{self}} (u_i)$ along the diagonal of $E$ and $E_{\text{pair}} (u_i,u_j)$ on the appropriate off-diagonal positions of $E$ as shown in \Cref{sec:Problem_Formulation}.

\subsubsection{Removing Collisions}

We typically observe some very large elements in $E$. This is due to the collisions between rotamers and they are indicated by huge values $E_{i,j} >>0$ that are often greater than $10^{10}$. These huge values occur due to a part of the Lennard-Jones potential formula that involves the Euclidean distance between two distinct rotamers that goes to the denominator of a fraction.

In general, having very large values in data is prone to numerical instabilities. 
If every nonzero elements of $E$ are large, the usual approach is to scale $E$ to avoid large values. However, the matrix $E$ often has elements that are more than $10$ digits as well as elements that are $1$ digit. 
When there is a large discrepancy among the elements of $E$, 
scaling~$E$ would make the relatively small values close to $0$ and lead to loss of precision in the solution. 
However, this ill-posed data does not take place as a problem in our implementation.  
Recall that we update the $Y$ iterate~\eqref{Y_update_complete_sqaure} as follows:
\[
\begin{array}{rcl}
Y^{k+1} 
&=&
 \cP_{\cY} \left( G_{\hat{\cJ}^c } \left( V R^{k+1} V^T - \frac{1}{\beta} (\hat{E}+Z^{k+\frac{1}{2}} ) \right)  \right)
\\&=&
 \cP_{\cY} \left( G_{\hat{\cJ}^c } \left( -\frac{1}{\beta} \hat{E} + \left[ V R^{k+1} V^T - \frac{1}{\beta} Z^{k+\frac{1}{2}} \right] \right)  \right).
\end{array}
\]
For simplicity, we let $T:= -\frac{1}{\beta} \hat{E} + \left[ V R^{k+1} V^T - \frac{1}{\beta} Z^{k+\frac{1}{2}} \right]$.
If  the $(\hat{i},\hat{j})$-th element of $\hat{E} = \BlkDiag(0,E)$ is very large, the projection~$\cP_{\cY} $ sets the $(\hat{i},\hat{j})$-element of~$T$ to $0$ since $T_{\hat{i},\hat{j}}<<0$.
Hence, for those positions $(\hat{i},\hat{j})$ with very large energy values, the constraint~$Y_{\hat{i},\hat{j}}=0$ is implicitly imposed.
We can interpret this as having implicit gangster constraints on these elements.
Consequently, the large elements do not contribute to the objective value since~$\hat{E}_{\hat{i},\hat{j}}Y_{\hat{i},\hat{j}}=0$.

\index{$N_E$}

We can also take advantage of large values in the data to increase the number of the gangster indices (eliminate edges in the graph).
\index{collisions}
\begin{lemma}
\label{lemma:colisionGangster}
Suppose that $x$ is feasible for (\IQPp), and let 
$u = x^TEx$ be its objective value.
Let $N_E = \sum_{\{(i,j) : E_{(i,j)}<0\}}E_{i,j}$ and suppose that 
\[
E_{i_0,j_0} > u-N_E, \text{  for some   }  i_0,j_0
\]
holds. Then for any optimal solution $x^*$ to (\IQPp), we have $x^*_{i_0}x^*_{j_0} = 0$.
\end{lemma}
\begin{proof}
Let $x^*$ be an optimal solution to (\IQPp).
Let $U^*$ be the set of indices formed by the positive entries of 
$\begin{pmatrix} 1 \\ x^* \end{pmatrix} 
\begin{pmatrix} 1 \\ x^* \end{pmatrix} ^T$.
We note that, for any index set $S$, we have
\[
\sum_{(i,j) \in S } E_{i,j}
=
\sum_{(i,j) \in S \cap \{(i,j):E_{i,j} \ge 0\} } E_{i,j}
+\sum_{(i,j) \in S \cap \{(i,j):E_{i,j} < 0\} } E_{i,j}
\ge 0 + N_E  = N_E.
\]
Suppose to the contrary that $x^*$ holds $x^*_{i_0}x^*_{j_0} = 1$, i.e., $x^*_{i_0}=x^*_{j_0} = 1$. 
Then we reach the following contradiction:
\[
p^*_{\IQPp} = \<x^*,Ex^*\>  =  E_{i_0,j_0} + \left(  E_{i_0,j_0}  + \sum\limits_{(i,j)\in U^* \setminus \{ (i_0,j_0) \}} E_{i,j} \right) \ \ge \ E_{i_0j_0}  +N_E > u .
\]
\end{proof}

\begin{corollary}
\label{coro:CollisionGangsterStrip}
Let $i_0$ be an index such that
$E_{i_0,i_0} > u-N_E$, where $u,N_E$ defined in \Cref{lemma:colisionGangster}.
Then, for any optimal solution $x^*$ to (\IQPp), we have
\[
Y_{x^*} :=\begin{pmatrix} 1 \\ x^* \end{pmatrix}
\begin{pmatrix} 1 \\ x^* \end{pmatrix}^T
\in \left\{Y \in \bS^{n_0+1}: Y(:,i_0) = 0, \ Y(i_0,:)=0 \right\}.
\]
\end{corollary}

\begin{proof}
Let $i_0$ be an index such that $E_{i_0,i_0} > u-N_E$.
Then $x_{i_0}^* = 0$ by \Cref{lemma:colisionGangster}.
We note that $Y_{x^*}$ is a positive semidefinite matrix.
If a diagonal entry of a positive semidefinite is zero, then its corresponding column and row must be $0$.
\end{proof}

By \Cref{lemma:colisionGangster} and \Cref{coro:CollisionGangsterStrip}, if we detect entries $i_0,j_0$ with the property  $E_{i_0,j_0} > u-N_E$, then we
may strengthen the model by adding  
the constraints 
\[
\mathcal{K} 
= \left\{ 
Y \in \bS^{n_0+1} : 
\begin{array}{ll}
Y(i_0,j_0) = Y(j_0,i_0) = 0, & \text{ for } i_0 \ne j_0 \text{ such that } \ E_{i_0,j_0} > u-N_E  \\
Y(:,i_0) = 0,\ Y(i_0,:) = 0, &  \text{ for } i_0 \text{ such that } \ E_{i_0,i_0} > u-N_E
\end{array}
\right\} .
\]
This can be easily realized by adding more members to the gangster index set $\hat{\cJ}$.

\subsection{Experiments with Real-World Data}
\label{sec:Experiments}

In this section we provide numerical experiments with real-world data from Protein Data Bank and discuss the strengths of the \DNN relaxation. 
We observe the useful aspects of the \textbf{DNN} relaxation through the numerical experiments. 
The \DNN relaxation provides an effective treatment for 
avoiding numerical instabilities that originate from the large positive values in the data matrix $E$. 
Moreover, we observe that the \DNN relaxation provides superior performance over the \SDP relaxation.

We select instances listed in  \cite{CanutescuAdrianA2003Agaf} with proteins that have up to $300$ amino acids. 
All instances in \Cref{table:proteinRepresentative} are tested using 
MATLAB version 2021a on Dell XPS 8940 with 11th Gen Intel(R) Core(TM) i5-11400 @ 2.60GHz 2.60 GHz with 32 Gigabyte memory.
The following list defines the column headers used in \Cref{table:proteinRepresentative}; we use the same headers to the additional numerical experiments that are displayed in \Cref{sec:AdditionalNumerics}.
\begin{enumerate}[itemsep=-1mm]
\item {\bf name}: instance name;
\item $\bm{p}$: the number of amino acids;
\item $\bm{n_0}$: the total number of rotamers;
\item {\bf lbd}: the lower bound obtained by running \rPRSMp;
\item {\bf ubd}: the upper bound obtained by running \rPRSMp;
\item {\bf rel-gap}: relative gap of each instance using \rPRSMp, 
where 
\begin{equation*}
\label{def:rel_gap}
\text{relative gap} := 2\ \frac{ | \text{best feasible upper
bound}- \text{best lower bound} | }{\ | \text{best feasible upper 
bound}+\text{best lower bound}+1 | } ;
\end{equation*}
\item {\bf iter}: number of iterations used by \rPRSM with tolerance 
	$\epsilon = 10^{-10}$;
	\item {\bf time(sec)}: CPU time (in seconds) used by \rPRSMp.
\end{enumerate}
\begin{table}[h!]
\centering
\small
\begin{tabular}{|cccc||ccc||cc|} \hline
\multicolumn{4}{|c||}{\textbf{Problem Data}} & \multicolumn{3}{|c||}{\textbf{Numerical Results}} & \multicolumn{2}{|c|}{\textbf{Timing}}  \cr\hline
\multicolumn{1}{|c|}{\#} & \multicolumn{1}{|c|}{\textbf{name}} & \multicolumn{1}{|c|}{$\bm{p}$} & \multicolumn{1}{|c||}{$\bm{n_0}$} & \multicolumn{1}{|c|}{\textbf{lbd}} & \multicolumn{1}{|c|}{\textbf{ubd}} & \multicolumn{1}{|c||}{\textbf{rel-gap}} & \multicolumn{1}{|c|}{\textbf{iter}} & \multicolumn{1}{|c|}{\textbf{time(sec)}}  \cr\hline
10  & 2IGD  & 50 & 126 & -78.50608 & -78.50608 & 5.39611e-15 & 500 &    19.43   \cr 
20  & 1VQB  & 75 & 406 & -96.94940 & -96.94940 & 4.34568e-14 & 900 &   179.35   \cr 
30  & 2ACY  & 84 & 580 & -146.32254 & -146.32254 & 1.06468e-14 & 7800 &  2610.24   \cr 
40  & 2TGI  & 100 & 355 & -14.03554 & -14.03554 & 2.46249e-13 & 1300 &   136.30   \cr 
50  & 2SAK  & 111 & 214 & -239.86975 & -239.86975 & 1.08995e-12 & 500 &    25.50   \cr 
60  & 2CPL  & 132 & 819 & -284.97180 & -284.97180 & 9.75693e-15 & 5900 &  3292.98   \cr 
70  & 1CV8  & 146 & 730 & -213.13554 & -213.13554 & 3.28738e-13 & 5600 &  2572.99   \cr 
80  & 2ENG  & 162 & 867 & 82.01797 & 82.01797 & 1.33295e-13 & 14200 &  8274.48   \cr 
90  & 1A7S  & 179 & 524 & -239.78218 & -239.78218 & 1.00542e-14 & 1200 &   314.57   \cr 
100  & 1MRJ  & 208 & 1178 & -295.13711 & -295.13711 & 1.70740e-13 & 2300 &  2421.15   \cr 
110  & 1EZM  & 239 & 1497 & -217.36581 & -217.36581 & 3.49620e-13 & 2300 &  3876.18   \cr 
120  & 1SBP  & 256 & 1704 & -271.08838 & -271.08838 & 3.59996e-14 & 40000 & 609487.29   \cr 
130  & 3PTE  & 284 & 2006 & 161.17216 & 161.17216 & 5.09815e-15 & 13500 & 250604.17   \cr\hline
\end{tabular}
\caption{Computational results on selected PDB instances}
\label{table:proteinRepresentative}
\end{table}

\paragraph{Discussion}

We observe from the last two columns of \Cref{table:proteinRepresentative} that many instances are solved within good relative gaps. 
In fact, most of the instances display relative gaps that are essentially~$0$. 
We recall from \eqref{eq:nearest_sol_LP} that we obtain the upper bounds via finding feasible solutions to (\IQPp).
We recall from \Cref{sec:Upper_Bound} that we obtain the upper bounds via finding feasible solutions to (\IQPp).
That we have the relative gap essentially $0$ grants us the attainment of the \emph{globally optimal} solutions to the \SCP problem. 
Approaches involving heuristic algorithms do not provide a natural means of certifying optimality, relying solely on a comparison of the rotameric solution with naive $\chi_1$ and $\chi_2$ angles from the PDB while ignoring optimality of the discretized solution. 
We highlight that we provide not only the globally optimal solutions but also a way to certify their optimality.

\subsubsection{A Tighter Relaxation}
\label{sec:AtightRelaxation}

We illustrate the strengths of the \DNN relaxation by computing the near optimal values of the \DNN relaxaion and the \SDP relaxation.
In our test, we selected five small instances. As discussed above, some elements of the energy matrix $E$ are typically very large due to the collisions in rotamers, typically at least $10$ digits. 
These cause numerical difficulties when a standard interior point solver is used. 
Hence, in our test, we set the entries $E_{i,j} = \min \{10^4, E_{i,j}\}$, $\forall i,j$, in order to avoid the difficulties from having these large elements. 
We used the \rPRSM for \DNN relaxation and used SDPT3\footnote{\url{https://www.math.cmu.edu/~reha/sdpt3.html}, version
SDPT3 4.0, \cite{ToddTohTut:96b}.} for solving the \SDP relaxation. 
\begin{table}[h]
\centering
\small
\begin{tabular}{|cc|c|c|} \hline
\textbf{problem $\#$} & \textbf{instance} & \DNN \textbf{relaxation} & \SDP \textbf{relaxation}  \cr\hline
1  & 1AIE  & -46.96 & -2460.53  \cr 
2  & 2ERL  & 55.33 & -18241.26  \cr 
3  & 1CBN  & -40.43 & -22380.58  \cr 
4  & 1RB9  & -76.97 & -23936.35  \cr 
5  & 1BX7  & 16.96 & -23965.88  \cr\hline
\end{tabular}
\caption{The solver optimal values of the \DNN and \SDP relaxations on selected instances}
\label{table:proteinDNNSDP}
\end{table}  
The displayed values in \Cref{table:proteinDNNSDP}  are the best lower bounds found from the \rPRSM and the optimal values reported by SDPT3. We observe in  \Cref{table:proteinDNNSDP}
that the \DNN relaxation shows superior performances over the \SDP relaxations in the relaxation values; the \DNN relaxation for the \SDP problem provides a much tighter relaxation than the \SDP relaxation.

\section{Conclusions}

We presented a simple way of formulating the relaxation of 
the \SCP problem.
We began by formulating the \SCP problem into an \IQP and derived the
facially reduced \SDP relaxation.
We then identified some redundant constraints to the \IQP to complete the \DNN relaxation.
\FR allowed for a natural splitting of the variables and provided a perfect environment for using splitting methods. 
Hence we adopted the \rPRSM to solve the \DNN relaxation of the \SCP problem. 
We illustrated the efficiency of our approach using data from the Protein Data Bank. In particular, we solved many instances chosen from the Protein Data Bank to optimality.

\cleardoublepage
\phantomsection
\addcontentsline{toc}{section}{Index}
\printindex
\label{ind:index}

\cleardoublepage
\phantomsection
\addcontentsline{toc}{section}{Bibliography}
\bibliography{bibarXiv.bib}



\appendix

\section{Additional Numerics}
\label{sec:AdditionalNumerics}

\begin{table}[H]
\centering
\scriptsize
\begin{tabular}{|cccc||ccc||cc|} \hline
\multicolumn{4}{|c||}{\textbf{Problem Data}} & \multicolumn{3}{|c||}{\textbf{Numerical Results}} & \multicolumn{2}{|c|}{\textbf{Timing}}  \cr\hline
\multicolumn{1}{|c|}{\#} & \multicolumn{1}{|c|}{\textbf{name}} & \multicolumn{1}{|c|}{$\bm{p}$} & \multicolumn{1}{|c||}{$\bm{n_0}$} & \multicolumn{1}{|c|}{\textbf{lbd}} & \multicolumn{1}{|c|}{\textbf{ubd}} & \multicolumn{1}{|c||}{\textbf{rel-gap}} & \multicolumn{1}{|c|}{\textbf{iter}} & \multicolumn{1}{|c|}{\textbf{time(sec)}}  \cr\hline
1  & 1AIE  & 26 & 34 & -46.95892 & -46.95892 & 1.04802e-15 & 200 &     0.10   \cr 
2  & 2ERL  & 34 & 103 & 55.33285 & 55.33284 & 1.17985e-12 & 200 &     5.85   \cr 
3  & 1CBN  & 37 & 112 & -40.42751 & -40.42751 & 1.68402e-14 & 300 &     7.77   \cr 
4  & 1RB9  & 41 & 105 & -76.96501 & -76.96501 & 7.11964e-13 & 1000 &    26.39   \cr 
5  & 1BX7  & 41 & 99 & 16.96026 & 16.96026 & 5.21525e-12 & 300 &     7.25   \cr 
6  & 2FDN  & 42 & 51 & -59.43091 & -59.43092 & 3.71094e-14 & 200 &     0.04   \cr 
7  & 1MOF  & 46 & 94 & -79.05580 & -79.05580 & 3.52629e-12 & 200 &     4.03   \cr 
8  & 1CTF  & 47 & 74 & -97.18893 & -97.18893 & 4.64633e-13 & 200 &     2.81   \cr 
9  & 1NKD  & 50 & 199 & -51.78466 & -51.78466 & 4.40639e-12 & 2680 &   192.65   \cr 
10  & 2IGD  & 50 & 126 & -78.50608 & -78.50608 & 5.39611e-15 & 500 &    14.67   \cr 
11  & 2SN3  & 53 & 112 & -5.56818 & -5.56818 & 6.73872e-13 & 700 &    16.77   \cr 
12  & 1MSI  & 54 & 112 & -87.46958 & -87.46958 & 1.72043e-13 & 700 &    19.39   \cr 
13  & 1AHO  & 54 & 140 & 24.66925 & 24.66925 & 4.19224e-14 & 1500 &    56.22   \cr 
14  & 1COR  & 60 & 131 & 15.58314 & 15.58314 & 4.58637e-12 & 1000 &    32.31   \cr 
15  & 1CTJ  & 61 & 258 & -103.32705 & -103.32705 & 1.64217e-12 & 1872 &   162.80   \cr 
16  & 1RZL  & 65 & 121 & 17.26470 & 17.26470 & 1.22992e-11 & 2468 &    68.52   \cr 
17  & 1TIF  & 66 & 614 & -155.17859 & -155.17859 & 4.69196e-14 & 1000 &   350.89   \cr 
18  & 1BDO  & 69 & 221 & -136.29933 & -136.29933 & 8.93377e-15 & 1000 &    75.06   \cr 
19  & 1OPD  & 70 & 112 & -139.64632 & -139.64632 & 1.18233e-13 & 300 &     5.98   \cr 
20  & 1VQB  & 75 & 406 & -96.94940 & -96.94940 & 4.34568e-14 & 900 &   147.36   \cr 
21  & 1IUZ  & 75 & 221 & -150.88238 & -150.88238 & 1.25791e-14 & 3200 &   227.45   \cr 
22  & 1ABA  & 76 & 376 & -137.59962 & -137.59963 & 9.05546e-15 & 600 &    88.43   \cr 
23  & 1FNA  & 76 & 131 & -172.01313 & -172.01313 & 3.64100e-14 & 800 &    23.32   \cr 
24  & 1CYO  & 78 & 220 & -75.36668 & -75.36668 & 1.36739e-14 & 700 &    48.50   \cr 
25  & 1FUS  & 79 & 302 & -4.66627 & -4.66627 & 1.11145e-12 & 3000 &   312.35   \cr 
26  & 2MCM  & 80 & 123 & -135.14024 & -135.14024 & 8.30816e-13 & 400 &    10.30   \cr 
27  & 1SVY  & 80 & 147 & -141.92437 & -141.92437 & 6.21219e-13 & 400 &    14.51   \cr 
28  & 1A68  & 81 & 424 & -178.12555 & -178.12555 & 2.54581e-15 & 1500 &   249.80   \cr 
29  & 1YCC  & 84 & 223 & -79.21270 & -79.21270 & 2.11079e-12 & 955 &    66.26   \cr 
30  & 2ACY  & 84 & 580 & -146.32254 & -146.32254 & 1.06468e-14 & 7800 &  2175.04   \cr 
31  & 1BM8  & 85 & 687 & -119.54537 & -119.54537 & 2.02428e-14 & 1300 &   509.88   \cr 
32  & 1BKF  & 89 & 339 & -170.80514 & -170.80514 & 1.60935e-14 & 1000 &   117.73   \cr 
33  & 3CYR  & 91 & 137 & -144.06405 & -144.06405 & 2.48290e-12 & 1900 &    52.09   \cr 
34  & 3VUB  & 92 & 544 & -229.38312 & -229.38312 & 7.41813e-16 & 1400 &   349.67   \cr 
35  & 1JER  & 96 & 462 & -120.78401 & -120.78400 & 1.15131e-12 & 3232 &   633.90   \cr 
36  & 2HBG  & 97 & 275 & -178.42210 & -178.42210 & 2.70839e-13 & 500 &    42.98   \cr 
37  & 1POA  & 97 & 470 & 278.08280 & 278.08280 & 2.02964e-12 & 5463 &  1099.55   \cr 
38  & 1C52  & 99 & 256 & -223.31096 & -223.31096 & 2.41281e-15 & 2700 &   203.46   \cr 
39  & 2A0B  & 99 & 642 & -161.45228 & -161.45228 & 1.75494e-16 & 5200 &  1800.90   \cr 
40  & 2TGI  & 100 & 355 & -14.03554 & -14.03554 & 2.46249e-13 & 1300 &   153.95   \cr\hline
\end{tabular}
\caption{\ Computation results on selected PDB instances up to $100$ amino acids}
\label{table:SCPaminoupto99}
\end{table}

\begin{table}[H]
\centering
\scriptsize
\begin{tabular}{|cccc||ccc||cc|} \hline
\multicolumn{4}{|c||}{\textbf{Problem Data}} & \multicolumn{3}{|c||}{\textbf{Numerical Results}} & \multicolumn{2}{|c|}{\textbf{Timing}}  \cr\hline
\multicolumn{1}{|c|}{\#} & \multicolumn{1}{|c|}{\textbf{name}} & \multicolumn{1}{|c|}{$\bm{p}$} & \multicolumn{1}{|c||}{$\bm{n_0}$} & \multicolumn{1}{|c|}{\textbf{lbd}} & \multicolumn{1}{|c|}{\textbf{ubd}} & \multicolumn{1}{|c||}{\textbf{rel-gap}} & \multicolumn{1}{|c|}{\textbf{iter}} & \multicolumn{1}{|c|}{\textbf{time(sec)}}  \cr\hline
41  & 3NUL  & 101 & 285 & -154.87542 & -154.87542 & 1.28046e-15 & 2300 &   307.34   \cr 
42  & 1WHI  & 101 & 298 & -247.13457 & -247.13457 & 6.94375e-14 & 1500 &   199.52   \cr 
43  & 1PDO  & 104 & 453 & -188.29848 & -188.29848 & 9.10541e-12 & 5754 &  1456.33   \cr 
44  & 3LZT  & 105 & 530 & -48.81821 & -48.81821 & 8.48591e-13 & 1100 &   300.50   \cr 
45  & 1DHN  & 105 & 519 & -133.77464 & -133.77464 & 1.35468e-13 & 2000 &   535.83   \cr 
46  & 1KUH  & 106 & 580 & -155.56590 & -155.56590 & 2.18536e-15 & 2296 &   743.57   \cr 
47  & 1ECA  & 108 & 655 & -169.74717 & -169.74717 & 1.66944e-16 & 25200 & 12563.89   \cr 
48  & 1BFG  & 108 & 410 & -191.73261 & -191.73262 & 8.54577e-14 & 900 &   210.84   \cr 
49  & 1RIE  & 108 & 930 & -117.91809 & -117.91809 & 1.57208e-14 & 20200 & 17809.01   \cr 
50  & 2SAK  & 111 & 214 & -239.86975 & -239.86975 & 1.08995e-12 & 500 &    37.26   \cr 
51  & 1BGF  & 112 & 1180 & -239.65571 & -239.65571 & 1.52549e-13 & 56400 & 71503.54   \cr 
52  & 2END  & 118 & 707 & -8.22833 & -8.22833 & 1.08596e-12 & 16100 &  8511.24   \cr 
53  & 2SNS  & 119 & 634 & 620.86546 & 620.86546 & 1.79304e-14 & 6900 &  3082.12   \cr 
54  & 1BD8  & 121 & 347 & -219.12419 & -219.12419 & 9.42666e-12 & 4970 &   760.81   \cr 
55  & 1NPK  & 122 & 709 & -205.56059 & -205.56059 & 6.77231e-13 & 59075 & 31212.37   \cr 
56  & 1A6M  & 124 & 613 & -55.41007 & -55.41008 & 4.93096e-14 & 22800 &  7608.82   \cr 
57  & 2RN2  & 127 & 830 & -198.37189 & -198.37189 & 1.41057e-13 & 6073 &  4053.13   \cr 
58  & 1RCF  & 130 & 733 & -86.59895 & -86.59775 & 1.38011e-05 & 100000 & 56927.20   \cr 
59  & 1LCL  & 131 & 1246 & -217.16433 & -217.16433 & 2.53317e-14 & 3800 &  4821.11   \cr 
60  & 2CPL  & 132 & 819 & -284.97180 & -284.97180 & 9.75693e-15 & 5900 &  3329.39   \cr 
61  & 1VHH  & 133 & 844 & -21.33604 & -21.33604 & 3.59566e-14 & 3200 &  1843.96   \cr 
62  & 1BJ7  & 135 & 917 & -64.37915 & -64.37915 & 5.69493e-14 & 11300 &  8946.94   \cr 
63  & 119L  & 136 & 970 & -234.21535 & -234.21535 & 8.01617e-14 & 34200 & 30890.87   \cr 
64  & 1RA9  & 136 & 1018 & -185.07235 & -185.07235 & 5.13076e-14 & 4400 &  4839.16   \cr 
65  & 1L58  & 137 & 962 & -285.60167 & -285.60167 & 1.31131e-14 & 15600 & 13812.60   \cr 
66  & 2ILK  & 142 & 708 & -121.02712 & -121.02712 & 1.82770e-13 & 4700 &  2750.13   \cr 
67  & 1KOE  & 144 & 710 & -13.87537 & -13.87537 & 1.27269e-11 & 4124 &  2490.08   \cr 
68  & 1HA1  & 146 & 538 & -213.93793 & -213.93793 & 1.44469e-13 & 3700 &  1229.31   \cr 
69  & 1CEX  & 146 & 415 & 174.95279 & 174.95279 & 2.40438e-11 & 11447 &  2426.49   \cr 
70  & 1CV8  & 146 & 730 & -213.13554 & -213.13554 & 3.28738e-13 & 5600 &  3442.13   \cr 
71  & 153L  & 149 & 846 & -170.13061 & -170.13061 & 3.03488e-13 & 2100 &  1554.46   \cr 
72  & 1BS9  & 150 & 935 & 103.16569 & 103.16569 & 1.31052e-13 & 2500 &  1736.57   \cr 
73  & 2PTH  & 151 & 1198 & -190.97344 & -190.97344 & 1.39085e-13 & 1900 &  2233.17   \cr 
74  & 1XNB  & 151 & 1233 & -147.30040 & -147.30040 & 2.69217e-15 & 13300 & 16562.76   \cr 
75  & 1AQB  & 152 & 713 & 29.24537 & 29.24537 & 9.30418e-14 & 39300 & 17795.39   \cr 
76  & 1LBU  & 152 & 1225 & 38.14603 & 38.14603 & 1.91397e-13 & 9900 & 11673.18   \cr 
77  & 1KID  & 153 & 653 & -351.91160 & -351.91160 & 2.90337e-15 & 6600 &  2607.24   \cr 
78  & 1CHD  & 154 & 489 & -164.21510 & -164.21510 & 3.27846e-14 & 19300 &  4097.50   \cr 
79  & 1AMM  & 158 & 1480 & -288.62671 & -288.62671 & 2.75245e-15 & 3300 &  5793.13   \cr 
80  & 2ENG  & 162 & 867 & 82.01797 & 82.01797 & 1.33295e-13 & 14200 &  8284.65   \cr 
81  & 1G3P  & 165 & 921 & -70.30769 & -70.30769 & 6.66312e-14 & 7000 &  4469.99   \cr 
82  & 1THV  & 167 & 902 & 5.12749 & 5.12749 & 4.63732e-12 & 4200 &  2637.88   \cr 
83  & 1PPN  & 170 & 1259 & -56.69346 & -56.69346 & 1.23365e-13 & 11589 & 14139.22   \cr 
84  & 1IAB  & 173 & 775 & 321.20652 & 321.20652 & 2.04964e-14 & 26500 & 13017.74   \cr 
85  & 1DIN  & 175 & 1110 & -264.73564 & -264.73548 & 5.84356e-07 & 100000 & 93357.26   \cr 
86  & 2AYH  & 176 & 1269 & 8428.18154 & 6089367.83709 & 1.99447e+00 & 100000 & 135879.29   \cr 
87  & 1ZIN  & 177 & 853 & -353.00431 & -353.00431 & 3.18384e-14 & 23800 & 13742.52   \cr 
88  & 1BYI  & 177 & 818 & -242.78881 & -242.78881 & 2.33646e-14 & 2400 &  1298.65   \cr 
89  & 2BAA  & 178 & 1165 & -43.77265 & -43.77265 & 1.95480e-12 & 4600 &  4785.88   \cr 
90  & 1A7S  & 179 & 524 & -239.78218 & -239.78218 & 1.00542e-14 & 1200 &   284.88   \cr 
91  & 1WAB  & 183 & 1063 & -317.46713 & -317.46713 & 9.40337e-14 & 8500 &  7357.75   \cr 
92  & 1MUN  & 185 & 1047 & -378.01261 & -378.01261 & 1.15635e-14 & 9500 &  7883.00   \cr 
93  & 1LST  & 192 & 946 & -244.76861 & -244.76861 & 1.28627e-14 & 32300 & 21374.44   \cr 
94  & 1GCI  & 194 & 1052 & -205.63185 & -205.63185 & 2.79899e-14 & 10300 &  8885.03   \cr 
95  & 3CLA  & 198 & 857 & -26.72768 & -26.72768 & 9.89051e-14 & 3900 &  2287.99   \cr\hline
\end{tabular}
\caption{\ Computation results on selected PDB instances up to $200$ amino acids}
\end{table}

\begin{table}[H]
\centering
\scriptsize
\begin{tabular}{|cccc||ccc||cc|} \hline
\multicolumn{4}{|c||}{\textbf{Problem Data}} & \multicolumn{3}{|c||}{\textbf{Numerical Results}} & \multicolumn{2}{|c|}{\textbf{Timing}}  \cr\hline
\multicolumn{1}{|c|}{\#} & \multicolumn{1}{|c|}{\textbf{name}} & \multicolumn{1}{|c|}{$\bm{p}$} & \multicolumn{1}{|c||}{$\bm{n_0}$} & \multicolumn{1}{|c|}{\textbf{lbd}} & \multicolumn{1}{|c|}{\textbf{ubd}} & \multicolumn{1}{|c||}{\textbf{rel-gap}} & \multicolumn{1}{|c|}{\textbf{iter}} & \multicolumn{1}{|c|}{\textbf{time(sec)}}  \cr\hline
96  & 1AL3  & 201 & 1077 & 119.66598 & 119.66598 & 3.39407e-14 & 12500 & 10188.87   \cr 
97  & 1ARB  & 202 & 1466 & -61.52823 & -61.52823 & 3.41363e-14 & 8900 & 14632.82   \cr 
98  & 1XJO  & 202 & 776 & -171.92443 & -171.92443 & 8.24179e-15 & 3700 &  1455.50   \cr 
99  & 1NLS  & 203 & 1060 & -297.73578 & -297.73578 & 5.33677e-15 & 2500 &  1976.08   \cr 
100  & 1MRJ  & 208 & 1178 & -295.13711 & -295.13711 & 1.70740e-13 & 2300 &  2149.63   \cr 
101  & 1OAA  & 208 & 854 & -317.83422 & -317.83422 & 1.44174e-12 & 3842 &  1823.52   \cr 
102  & 2DRI  & 210 & 906 & -398.45564 & -398.45564 & 2.56465e-15 & 6200 &  3225.99   \cr 
103  & 2CBA  & 223 & 1018 & -86.52145 & -86.52145 & 5.34000e-14 & 3400 &  2407.24   \cr 
104  & 2POR  & 224 & 1304 & -83.22221 & -83.22221 & 5.55044e-14 & 6700 &  8044.39   \cr 
105  & 3SEB  & 224 & 1412 & 77.15838 & 77.15852 & 1.84867e-06 & 100000 & 137194.81   \cr 
106  & 1MLA  & 227 & 1322 & -484.10542 & -484.10542 & 1.68910e-14 & 62900 & 75257.79   \cr 
107  & 1DCS  & 232 & 1170 & -342.68600 & -342.68600 & 1.39133e-14 & 8000 &  7459.07   \cr 
108  & 1AKO  & 234 & 1387 & -244.65691 & -244.65691 & 1.18251e-14 & 7400 &  9809.00   \cr 
109  & 1PDA  & 239 & 891 & -423.50226 & -423.50226 & 4.96037e-15 & 9100 &  4520.68   \cr 
110  & 1EZM  & 239 & 1497 & -217.36581 & -217.36581 & 3.49620e-13 & 2300 &  3919.92   \cr 
111  & 1C3D  & 243 & 1679 & -400.69876 & -400.69876 & 1.04846e-14 & 22100 & 134094.53   \cr 
112  & 1RHS  & 244 & 1973 & -341.20443 & -341.20443 & 1.41400e-14 & 7300 & 62136.57   \cr 
113  & 8ABP  & 245 & 1743 & -273.90715 & -273.90716 & 2.27865e-15 & 9000 & 59868.98   \cr 
114  & 1CVL  & 246 & 910 & -537.04249 & -537.04249 & 2.11494e-16 & 14800 &  7522.51   \cr 
115  & 1RYC  & 248 & 1831 & -202.60568 & -202.60568 & 4.81378e-14 & 15200 & 84674.22   \cr 
116  & 1MRP  & 248 & 1648 & -350.97062 & -350.97062 & 1.39088e-14 & 11000 & 34303.23   \cr 
117  & 1IXH  & 252 & 1134 & -289.75241 & -289.75241 & 4.11267e-14 & 1300 &  1087.30   \cr 
118  & 1FNC  & 253 & 1940 & -310.60999 & -310.60999 & 6.54656e-13 & 34321 & 292924.91   \cr 
119  & 1TCA  & 255 & 1062 & -422.15387 & -422.15387 & 4.24994e-14 & 8700 &  6424.87   \cr 
120  & 1SBP  & 256 & 1704 & -271.08838 & -271.08838 & 3.59996e-14 & 40000 & 156330.60   \cr 
121  & 2CTC  & 264 & 1536 & -213.88596 & -213.88596 & 2.17419e-14 & 15100 & 43642.85   \cr 
122  & 1PGS  & 265 & 2190 & -16.14049 & -16.14049 & 2.28785e-12 & 21300 & 269611.15   \cr 
123  & 1MSK  & 271 & 1798 & -162.51007 & -162.50978 & 1.77573e-06 & 100000 & 771330.61   \cr 
124  & 1BG6  & 271 & 784 & -452.62383 & -452.62383 & 3.13620e-15 & 12700 &  4935.11   \cr 
125  & 1ARU  & 271 & 939 & -314.40612 & -314.40589 & 7.15908e-07 & 100000 & 53858.54   \cr 
126  & 1A8E  & 274 & 1096 & -249.85499 & -249.85499 & 3.58741e-14 & 96500 & 78746.74   \cr 
127  & 1AXN  & 278 & 2343 & -300.34291 & -300.34291 & 7.55789e-15 & 12500 & 207625.02   \cr 
128  & 1TAG  & 279 & 1330 & -253.22167 & -253.22167 & 1.68029e-14 & 4300 &  5038.43   \cr 
129  & 1ADS  & 280 & 1560 & 733.91439 & 733.91440 & 1.39319e-13 & 18273 & 65301.22   \cr 
130  & 3PTE  & 284 & 2006 & 161.17216 & 161.17216 & 5.09815e-15 & 13500 & 59169.60   \cr 
131  & 1CEM  & 292 & 2400 & -24.20196 & -24.20196 & 3.85446e-14 & 7000 & 47701.70   \cr\hline
\end{tabular}
\caption{\ Computation results on selected PDB instances up to $300$ amino acids}
\label{table:SCPaminoupto299}
\end{table}

\end{document}